\newcommand{\ind}{\mathds{1}}
\newtheorem{assumption}[theorem]{Assumption}%
\newcommand{\dE}{\mathbb{E}}
\newcommand{\dP}{\mathbb{P}}
\newcommand{\dR}{\mathbb{R}}
\newcommand{\cE}{\mathcal{E}}
\newcommand{\cG}{\mathcal{G}}
\newcommand{\cL}{\mathcal{L}}
\newcommand{\cP}{\mathcal{P}}
\newcommand{\cW}{\mathcal{W}}
\newcommand{\gen}{\mathfrak{L}}
\newcommand{\ABS}[1]{{{\left| #1 \right|}}} 
\newcommand{\BRA}[1]{{{\left\{#1\right\}}}} 
\newcommand{\DP}[1]{{{\left<#1\right>}}} 
\newcommand{\NRM}[1]{{{\left\| #1\right\|}}} 
\newcommand{\PAR}[1]{{{\left(#1\right)}}} 
\newcommand{\SBRA}[1]{{{\left[#1\right]}}} 
\renewcommand{\leq}{\leqslant}
\renewcommand{\geq}{\geqslant}
\newcommand{\pullbk}[2]{%
\ifthenelse{\equal{#1}{}}{%
  \Phi^{\star}_{#2}}{%
  \Phi^{#1,\star}_{#2}}%
}
\newcommand{\pushfw}[2]{%
\ifthenelse{\equal{#2}{}}{%
  \Phi^{#1}_{\star}}{%
  \Phi^{#1}_{#2,\star}}%
}
\begin{document}

\section{Introduction and main results}

Piecewise deterministic Markov processes (PDMPs in short) are 
intensively used in many applied areas (molecular biology \cite{RMC}, 
storage modelling \cite{boxma}, Internet traffic
\cite{MR1895332,MR2588247,GR2}, neuronal
  activity \cite{PTW,riedler}, populations growth models \cite{Kussel}...). 
 Roughly speaking, a Markov process is a PDMP if its randomness is only 
 given by the jump mechanism: in particular, it admits no diffusive 
 dynamics. This huge class of processes has been introduced by 
 Davis (see \cite{davis,MR1283589}) in a general framework. Several 
 works \cite{MR1039184,MR1710229,MR2274799} deal with their 
 long time behavior (existence of an invariant probability measure, 
 Harris recurrence, exponential ergodicity...). In particular, it is shown in 
 \cite{MR2385873} that the behavior of a general PDMP can be related to 
 the one of the discrete time Markov chain made of the positions at the 
 jump times of the process and of an additional independent Poisson 
 process. Nevertheless, this general approach does not seem to 
 provide quantitative bounds for the convergence 
 to equilibrium. Recent papers have tried to establish such 
 estimates for some specific PDMPs (see \cite{CMP,FGM,BCGMZ}) 
 or continuous time Markov chains (see \cite{MR2548501}).

 In the present paper, we investigate the long time behavior of an 
 interesting subclass of PDMPs that plays a role in molecular biology 
 (see \cite{ML81,RMC, riedler,WTP10}). We consider a PDMP on $\dR^d\times E$ 
 where $E$ is a finite set. The first coordinate moves continuously 
 on $\dR^d$ according to a smooth vector field that depends on the 
 second coordinate whereas the second coordinate jumps with a 
 rate that may depend on the first one. This class of Markov processes 
 is reminiscent of the so-called iterated random functions in the discrete 
 time setting (see \cite{diaconis} for a good review of this topic). 
   
Let $E$ be a finite set, $(a(\cdot,i,j))_{i,j\in E^2}$ be 
$n^2$ nonnegative continuous functions on $\dR^d$, and, for
any $i\in E$, $F^i\,:\, \dR^d\mapsto \dR^d$ 
 be a smooth vector field such that the ordinary differential equation 
\[
\begin{cases}
 x_t'=F^i(x_t),\quad t>0;\\
 x_0=x,
\end{cases}
\]
 has a unique and global solution $t\mapsto \varphi^i_t(x)$ 
 on $[0,\infty)$ for any initial condition $x\in\dR^d$.
Let us consider the Markov process 
\[
{(Z_t)}_{t\geq 0}={((X_t,I_t))}_{t\geq 0} 
\text{ on }\dR^d\times E 
\]
defined by its extended generator $L$ as follows:
\begin{equation}\label{eq:gen-inf-var}
Lf(x,i)=
\DP{F^i(x),\nabla_x f(x,i)}+
\sum_{j\in E}a(x,i,j)(f(x,j)-f(x,i)) 
\end{equation}
for any smooth function $f:\ \dR^d\times E\rightarrow \dR$ 
(see \cite{MR1283589} for full details on the domain of $L$). 

\begin{definition}
In the sequel, $\eta_t$ (resp. $\mu_t$) stands for the law of $Z_t=(X_t,I_t)$
(resp of $X_t$) if ${(Z_t)}_{t\geq 0}$ is driven by~\eqref{eq:gen-inf-var} with 
initial distribution~$\eta_0$.
\end{definition}

A simple case occurs when $a(x,i,j)$ can be written as $\lambda(x,i)P(i,j)$
for $(\lambda(\cdot, i))_{i\in E}$ a set of nonnegative continuous functions, 
and $P$ an irreducible stochastic matrix, in which case:
\begin{equation}\label{eq:gen-inf}
Lf(x,i)=
\DP{F^i(x),\nabla_x f(x,i)}+
\lambda(x,i)\sum_{j\in E}P(i,j)(f(x,j)-f(x,i)).
\end{equation}
Let us describe the dynamics of this process in this simple case, the general case
being similar. Assume that 
$(X_0,I_0)=(x,i)\in\dR^d\times E$. Before the first jump time $T_1$ of $I$, 
the first component $X$ is driven by the vector field $F^i$ and then 
$X_t=\varphi^i_t(x)$. The time $T_1$ can be defined by: 
\[
T_1=\inf\BRA{t>0\, :\ \int_0^{t}\!\lambda(X_s,i)\,ds\geq E_1},
\]
where $E_1$ is an exponential random variable with parameter 1. 
Since the paths of $X$ are deterministic between the jump times 
of $I$, the randomness of $T_1$ comes from the one of $E_1$ 
and 
\[
T_1=\inf\BRA{t>0\, :\ \int_0^t\!\lambda(\varphi^i_s(x),i)\,ds\geq E_1}.
\]

\begin{remark}
Notice that $\dP_{(x,i)}(T_1=+\infty)>0$ if and only if 
\[
 \int_0^{+\infty}\!\lambda(\varphi^i_s(x),i)\,ds<+\infty.
\]
If we assume that $\underline\lambda:=\inf_{(x,i)}\lambda(x,i)>0$ then 
the process $I$ jumps infinitely often. 
\end{remark}

At time $T_1$, the second coordinate $I$ performs a jump with 
the law $P(i,\cdot)$ and the vector field that drives the evolution 
of $X$ is switched\ldots 

\begin{remark}
In general, $I$ is not a Markov process on its own since its jump 
rates depend on $X$. In this paper, we will study both the 
simple --- Markovian --- case and the general case. 
\end{remark}

The main goal of the present work is to provide quantitative bounds for 
the long time behavior of ergodic processes driven by \eqref{eq:gen-inf} 
thanks to the construction of explicit couplings. We need to introduce a 
distance on the set of probability measures to quantify this convergence. 

\begin{definition}
Let $\cP_p$ be the set of probability measures on $\dR^d\times E$ such that the first 
marginal has a finite $p^\mathrm{th}$ moment. If $\eta\in\cP_p$ and $Z$ is a random 
variable on $\dR^d\times E$ with distribution $\eta$, we denote by $X$ and $I$ the 
two coordinates of $Z$ in $\dR^d$ and $E$ respectively and by $\mu$ and $\nu$ 
their distributions.

Let us now define the distance $\cW_p$ on $\cP_p$ as follows: for $\eta,\tilde\eta\in\cP_p$, 
\[
\cW_p(\eta,\tilde\eta)=\inf\PAR{\PAR{\dE\SBRA{\vert X-\tilde X\vert^p}}^{1/p}+\dP(I\neq \tilde I)
\,:\, (X,I)\sim \eta\text{ and }(\tilde X,\tilde I)\sim \tilde \eta}.
\] 
\end{definition}

The distance $\cW_p$ is made of a mixture of the Wasserstein distance for the 
first component and the total variation distance for the second one. Recall that 
for every $p\geq1$, the Wasserstein distance $W_p$ between two laws 
$\mu$ and $\tilde \mu$ on $\dR^d$ with finite $p^\mathrm{th}$ moment is defined by
\begin{equation}\label{eq:Wp}
  W_p(\mu,\tilde\mu) %
  = \left(\inf_{\Pi}\int_{\dR^d\times\dR^d}\!|x-\tilde x|^p\,\Pi(dx,d\tilde x)
  \right)^{1/p}
\end{equation}
where the infimum runs over all the probability measures $\Pi$ on 
$\dR^d\times \dR^d$ with marginals $\mu$ and $\tilde \mu$ (such 
measures are called couplings of $\mu$ and $\tilde\mu$). It is well 
known that for any $p\geq1$, the convergence in $W_p$ Wasserstein 
distance is equivalent to weak convergence together 
with convergence of all moments up to order $p$. However, two probability 
measures can be both very close in the $W_p$ sense and singular. Choose
for example $\mu=\delta_0$ and $\tilde \mu=\delta_\varepsilon$. In this 
case, 
\[
W_p(\mu,\tilde\mu)=\varepsilon
\quad\text{and}\quad 
\NRM{\mu-\tilde \mu}_{\mathrm{TV}}=1. 
\]
See e.g. \cite{MR1105086,MR1964483} for further details and properties for 
Wasserstein distances. 

Estimates for the Wasserstein distances do not require (nor provide) any 
information about the support of the invariant measure (which is the set of 
the recurrent points). This set may be difficult to determine and if the initial 
distribution of $X$ is not supported by this set, the law of $X_t$ and the 
invariant measure may be singular. To illustrate this fact, one can consider 
the following trivial example: 
\[
(x,i)\in \dR^2\times\BRA{0,1},\quad \lambda(x,i)=1,\quad F^i(x)=-(x-i a) 
\quad\text{with }a=(1,0). 
\] 
The process $(X,I)$ is ergodic and the first marginal $\mu$ of its 
invariant measure is supported by the segment 
$\BRA{\rho a\ ;\ \rho\in[0,1]}$ (it is shown in \cite{boxma} that $\mu$ 
is a Beta distribution). Despite its extreme simplicity, this process does not
in general converge in total variation: 
if $Z_0=(0,1)$, the law of $X_t$ is singular with the invariant measure 
for any $t\geq 0$, so $\NRM{\cL(X_t)-\mu}_\mathrm{TV}$ is always equal 
to $1$. On the contrary, $W_p(\cL(X_t),\mu)$ goes to $0$ exponentially 
fast (see below).

We are able to get explicit rates of convergence in two situations. 
Firstly, if the jump rates of $I$ does not depend on $X$, 
then the vector fields ${(F^i)}_{i\in E}$ will be assumed to satisfy 
an averaged exponential stability. Secondly, if the jump rates of $I$ 
are assumed to be Lipschitz functions of $X$, then the vector 
fields ${(F^i)}_{i\in E}$ will be assumed to satisfy a uniform 
exponential stability.

\begin{remark}
These conditions are not necessary to ensure the ergodicity of $Z$. 
In~\cite{BH}, it is shown that, for constant jump rates and under Hörmander-like 
conditions on the vectors fields ${(F^i)}_{i\in E}$, the invariant probability measure is unique 
and absolutely continuous with respect to the Lebesgue measure on $\dR^d$.
In a similar setting,  exponential rates of convergence to equilibrium are obtained in~\cite{BLMZ2}. 
But these estimates rely on compactness arguments and are not explicit. 
\end{remark}

%
%
%

\subsection{Constant jump rates}

If the jump rates of $I$ do not depend on $X$, then ${(I_t)}_{t\geq 0}$ is a 
Markov process on the finite space $E$ and ${(X_s)}_{0\leq s\leq t}$ is 
a deterministic function of ${(I_s)}_{0\leq s\leq t}$. Many results are 
available both in the discrete time setting (see 
\cite{kesten,vervaat,goldie-grubel,hitwes,iksanov}) 
and in the continuous time setting (see \cite{guyon,saporta-yao,bgm}).  
Moreover, if Assumption~\ref{as:dissip-moy} (see below) does not hold, \cite{BLMZ1} 
provides a simple example of surprising 
phase transition for a switching of two exponentially stable flows that 
can be explosive (when the jump rates are sufficiently large). 
\begin{assumption}\label{as:ymarkov}
 Assume that the jump rates $(a(\cdot,i,j))_{i\in E}$ do not depend 
 on $x$ and that $I$ is an irreducible Markov process on $E$. Let 
 us denote by $\nu$ its invariant probability measure. 
\end{assumption}
\begin{remark}
  If $a(\cdot,i,j)$ does not depend on $x$, we can always write
  $a(i,j) = \lambda(i)P(i,j)$ with $P(\cdot,\cdot)$ a Markov transition matrix.
\end{remark}

\begin{assumption}\label{as:dissip-moy}
Assume that for any $i\in E$, there exists  $\alpha(i)\in\dR$ such that, 
\[
\DP{x-\tilde x,F^i(x)-F^i(\tilde x)}\leq -\alpha(i)\ABS{x-\tilde x}^2,
\quad
x,\tilde x\in\dR^d,
\]
and that 
\[
\sum_{i\in E}\alpha(i)\nu(i)>0
\]
where $\nu$ is defined in Assumption \ref{as:ymarkov}.
\end{assumption}
Firstly, one can establish that the process $X$ is bounded in some 
$L^p$ space. 
\begin{lemma}\label{lem:moment2}
  Under Assumptions \ref{as:ymarkov} and \ref{as:dissip-moy}, there exists 
 $\kappa>0$ such that, for any $q<\kappa$, the function 
 $t\mapsto\dE(\ABS{X_t}^q)$ is bounded as soon as $\dE(\ABS{X_0}^q)$ 
 is finite. More precisely, there exists $M(q,m)$ such that 
\[
 \sup_{t\geq 0}\dE(\ABS{X_t}^q)\leq M(q,m),
\]
as soon as $\dE(\ABS{X_0}^q)\leq m$. 
\end{lemma}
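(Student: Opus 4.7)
The plan is to exhibit a Foster--Lyapunov function of the form $V(x,i) = (1+|x|^2)^{q/2}\,g(i)$, with $g\colon E\to(0,\infty)$ cleverly chosen, to establish a drift inequality $LV \leq -\delta\,V + K$, and to conclude via Dynkin's formula and Gr\"onwall.

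The key ingredient is the choice of $g$, which I would obtain through a spectral perturbation argument. Let $Q=(a(i,j)-\delta_{ij}\sum_k a(i,k))_{i,j}$ be the matrix generator of $I$, and set $\Lambda_\alpha = \operatorname{diag}(\alpha(i))_{i\in E}$. For $q\geq 0$, the matrix $M_q := Q - q\Lambda_\alpha$ has nonnegative off-diagonal entries and is irreducible, so the Perron--Frobenius theorem yields a real, simple eigenvalue $\lambda(q)$ of largest real part, with a strictly positive right eigenvector $g_q$. At $q=0$ one has $\lambda(0)=0$ and $g_0=\mathbf{1}$, and first-order eigenvalue perturbation gives
\[
\lambda'(0) \;=\; -\sum_{i\in E}\nu(i)\alpha(i) \;<\; 0
\]
by Assumption~\ref{as:dissip-moy}. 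Hence $\kappa := \sup\{q>0:\lambda(q)<0\}$ is strictly positive, and for each $q\in(0,\kappa)$ I fix $g := g_q$.

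Next, I would compute $LV$. Applying the dissipativity bound with $\tilde x = 0$ gives $\DP{x,F^i(x)} \leq -\alpha(i)|x|^2 + C|x|$ with $C := \max_{i\in E}|F^i(0)|<\infty$ (finite because $E$ is finite). Combined with the identity $|x|^2(1+|x|^2)^{q/2-1}=(1+|x|^2)^{q/2}-(1+|x|^2)^{q/2-1}$, a direct computation yields
\[
LV(x,i) \;\leq\; \bigl[-q\alpha(i)g(i)+(Qg)(i)\bigr](1+|x|^2)^{q/2} + R_q(x,i) \;=\; \lambda(q)\,V(x,i) + R_q(x,i),
\]
where $R_q(x,i) = O((1+|x|^2)^{(q-1)/2})$ uniformly in $i$. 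Since this exponent is strictly below $q/2$, the remainder is of strictly smaller order than $V$, so for any $\varepsilon>0$ there exists $K_\varepsilon$ with $R_q \leq \varepsilon V + K_\varepsilon$. Picking $\varepsilon < |\lambda(q)|$ produces the drift inequality $LV \leq -\delta V + K_\varepsilon$ with $\delta := |\lambda(q)|-\varepsilon > 0$.

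To finish, Dynkin's formula (applied after a standard localization, since $V$ is smooth and $F^i$ locally Lipschitz) combined with the drift inequality gives $\frac{d}{dt}\dE V(Z_t) \leq -\delta\,\dE V(Z_t) + K_\varepsilon$, so $\dE V(Z_t)\leq \dE V(Z_0) + K_\varepsilon/\delta$ for all $t\geq 0$. Since $g$ is two-sided bounded by positive constants and $|x|^q \leq (1+|x|^2)^{q/2} \leq c_q(1+|x|^q)$, this translates into $\sup_{t\geq 0}\dE|X_t|^q \leq M(q,m)$ whenever $\dE|X_0|^q \leq m$. The main obstacle is the spectral perturbation yielding $g$ and the sign of $\lambda(q)$: one has to justify that Perron--Frobenius applies to the non-generator matrix $M_q$ and that the derivative of the principal eigenvalue at $q=0$ is $-\sum_i\nu(i)\alpha(i)$. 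Once the Lyapunov function is in hand, the drift computation and Gr\"onwall conclusion are entirely routine.
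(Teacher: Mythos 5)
Your proof is correct, and it takes a genuinely different (and more self-contained) route than the paper. The paper computes $\frac{d}{dt}|X_t|^p$ pathwise using the dissipativity bound, applies Gr\"onwall's inequality along the trajectory, and then reduces the problem to controlling the Feynman--Kac quantity $e(p,t)=\max_i\dE_i\bigl(\exp\bigl(-\int_0^t p\alpha(I_u)\,du\bigr)\bigr)$, whose exponential rate $\theta_p$ is identified with the spectral gap of the matrix $A_p=A-pB$ by citing an external result from the reference~[bgm] (their Equations~\eqref{eq:etap}--\eqref{eq:conv-expo}). Your argument instead folds the whole thing into a single Lyapunov/drift inequality: you build $V(x,i)=(1+|x|^2)^{q/2}g_q(i)$ with $g_q$ the Perron eigenvector of precisely that same matrix $M_q=Q-q\Lambda_\alpha$, so that the leading-order term in $LV$ is $\lambda(q)V$ and the remainder is of strictly smaller polynomial order. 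The two constructions are two sides of the same coin --- your $\lambda(q)$ is $-\theta_p$ in the paper's notation, and your perturbation computation $\lambda'(0)=-\sum_i\nu(i)\alpha(i)<0$ is exactly the content behind the paper's dichotomy ``$\theta_p>0$ for $p<\kappa$'' --- but you give a direct proof of the needed spectral fact rather than outsourcing it, and the Lyapunov formulation avoids the intermediate pathwise Gr\"onwall step (it also sidesteps the paper's caveat about regularizing $x\mapsto|x|^p$ for $p\in[1,2)$, since $(1+|x|^2)^{q/2}$ is already smooth for all $q>0$). The argument is routine to complete as you sketch: $M_q$ is an irreducible Metzler matrix so Perron--Frobenius applies after adding a multiple of the identity; simplicity of the Perron root gives analyticity of $q\mapsto\lambda(q)$; the first-order perturbation formula $\lambda'(0)=\nu(-\Lambda_\alpha)\mathbf{1}/\nu\mathbf{1}$ is standard; and localization in Dynkin's formula is unproblematic since $F^i$ is locally Lipschitz and the dissipativity prevents explosion.
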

Let us now turn to the long time behavior estimate. 
\begin{theorem}\label{th:constant}
 Assume that Assumptions \ref{as:ymarkov} and \ref{as:dissip-moy} hold. 
 Let $p<q<\kappa$ and denote by $s$ the conjugate of $q$: 
 $q^{-1}+s^{-1}=1$. Assume that $\mu_0$ and $\tilde \mu_0$ admit a finite 
 $q^{th}$ moment smaller than $m$. Then, 
\[
 \cW_p(\eta_t,\tilde \eta_t)\leq 
 2^{p+1}M(q,m)^{p/q}C_2(p) \exp\PAR{-\frac{\theta_p}{1+s\theta_p/\rho}t},
\] 
where $\rho$ and $\theta_p$ are positive constants depending only on 
the Markov chain $I$. 
\end{theorem}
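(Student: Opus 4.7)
The plan is to construct an explicit coupling of two copies of the process, exploiting the fact that under Assumption~\ref{as:ymarkov} the discrete component $I$ is an autonomous Markov chain on the finite set $E$. First I would couple $I$ and $\tilde I$ by the standard coalescent coupling of finite-state Markov chains; irreducibility yields a constant $\rho>0$ and a coupling whose meeting time $\tau=\inf\{t\geq0:I_s=\tilde I_s \text{ for all } s\geq t\}$ has an exponential tail $\dP(\tau>t)\leq e^{-\rho t}$. Given the full paths of $I$ and $\tilde I$, the continuous components $X$ and $\tilde X$ are then deterministic solutions of the switched ODEs, and the tail of $\tau$ immediately gives $\dP(I_t\neq\tilde I_t)\leq e^{-\rho t}$, which handles the discrete part of $\cW_p$.

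Once $s\geq\tau$ we have $I_s=\tilde I_s$, so $X$ and $\tilde X$ solve the same switched ODE, and Assumption~\ref{as:dissip-moy} yields the pathwise contraction
\[
|X_t-\tilde X_t|\leq |X_\tau-\tilde X_\tau|\exp\left(-\int_\tau^t\alpha(I_s)\,ds\right),\qquad t\geq\tau.
\]
The key quantitative ingredient, which I expect to be the main obstacle, is a Feynman--Kac / Perron--Frobenius estimate: the averaged positivity $\sum_i\alpha(i)\nu(i)>0$ together with irreducibility of $I$ should give, for $p$ small enough (with $\kappa$ from Lemma~\ref{lem:moment2} controlling the admissible range), the existence of $\theta_p>0$ and $C_2(p)<\infty$ such that
\[
\dE\left[\exp\left(-ps\int_0^t\alpha(I_s)\,ds\right)\right]^{1/s}\leq C_2(p)\,e^{-\theta_p t}.
\]
This follows by viewing $-\theta_p$ as the Perron eigenvalue of the generator of $I$ perturbed by the diagonal multiplicative potential $-ps\,\alpha$; positivity of $\theta_p$ is exactly the ergodic-theoretic reading of Assumption~\ref{as:dissip-moy}.

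To bound $\dE[|X_t-\tilde X_t|^p]$ I would introduce a free parameter $t_0\in(0,t)$ and split according to $\{\tau>t_0\}$ versus $\{\tau\leq t_0\}$. On $\{\tau>t_0\}$, Hölder's inequality with exponents $q/p$ and $s$, combined with Lemma~\ref{lem:moment2} and the tail of $\tau$, yields a bound of order $M(q,m)^{p/q}e^{-\rho t_0/s}$. On $\{\tau\leq t_0\}$ the pathwise contraction applied from $t_0$ onwards together with the Feynman--Kac estimate (again via Hölder with exponents $q/p$ and $s$) gives a bound of order $M(q,m)^{p/q}C_2(p)\,e^{-\theta_p(t-t_0)}$. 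Choosing $t_0$ so that $\rho t_0/s=\theta_p(t-t_0)$ balances the two contributions and produces the effective rate $\theta_p\rho/(\rho+s\theta_p)=\theta_p/(1+s\theta_p/\rho)$. Adding the discrete component bound $e^{-\rho t}$ (whose rate $\rho$ is at least as large as the $W_p$ rate) and collecting constants then yields the announced estimate on $\cW_p(\eta_t,\tilde\eta_t)$.
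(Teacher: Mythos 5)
Your overall strategy matches the paper: couple the discrete chains so that after the meeting time $T$ they agree forever, use the Feynman--Kac/Perron--Frobenius estimate $e(p,t)\leq C_2(p)e^{-\theta_p t}$ to quantify the contraction of $|X-\tilde X|$, split at a threshold time, bound the pre-coupling term by H\"older plus the moment estimate, and optimize the threshold to balance the two exponents. Your treatment of the term $\{T>t_0\}$ is exactly the paper's.

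However, the way you handle $\{T\leq t_0\}$ contains a genuine error. You bound
$\dE\bigl[|X_{t_0}-\tilde X_{t_0}|^p\exp(-p\int_{t_0}^t\alpha(I_u)\,du)\bigr]$
by H\"older with exponents $q/p$ and its conjugate, which forces the Feynman--Kac factor to appear with the inflated exponent $p\cdot\frac{q}{q-p}>p$. The correct conclusion of that H\"older step is therefore $M(q,m)^{p/q}\,C_2\bigl(\tfrac{pq}{q-p}\bigr)^{(q-p)/q}\,e^{-\theta_{pq/(q-p)}\frac{q-p}{q}(t-t_0)}$, not $M(q,m)^{p/q}\,C_2(p)\,e^{-\theta_p(t-t_0)}$. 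These are not the same: $p\mapsto\theta_p$ is concave with $\theta_0=0$, so $\theta_{cp}/c$ is decreasing in $c\geq1$ and the effective rate you obtain is at most $\theta_p$, generally strictly smaller. Worse, the theorem only requires $p<q<\kappa$, which does not preclude $\frac{pq}{q-p}\geq\kappa$ (e.g.\ $p=1$, $q=1.5$, $\kappa=2$ gives $\frac{pq}{q-p}=3$); in that case $\theta_{pq/(q-p)}$ may be negative and the bound does not decay at all. So the displayed Feynman--Kac estimate $\dE[\exp(-ps\int_0^t\alpha)]^{1/s}\leq C_2(p)e^{-\theta_p t}$ is simply false as stated.

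The paper sidesteps this entirely: rather than applying H\"older at the fixed time $t_0$, it conditions on $\mathcal F_T$ at the (random) coupling time $T$ and uses the strong Markov property, so the Feynman--Kac factor $\dE_{I_T}\bigl[\exp(-p\int_0^{t-T}\alpha(I_u)\,du)\bigr]$ appears with exponent exactly $p$ and is bounded uniformly by $C_2(p)e^{-\theta_p(t-T)}\leq C_2(p)e^{-\theta_p(1-\beta)t}$ on $\{T\leq\beta t\}$. The prefactor $|X_T-\tilde X_T|^p$ is then controlled directly by the moment estimate; no H\"older inflation occurs, so only $\theta_p$ (not $\theta_{ps}$) enters. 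You should replace your H\"older step on the post-coupling event by this conditioning-at-$T$ argument. (A minor unrelated remark: the theorem statement says $s$ is the conjugate of $q$, but the exponent that actually appears in the proof's H\"older step is the conjugate of $q/p$; your write-up inherits this inconsistency from the paper.)
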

The constants $\rho$, $\theta_p$ and $C_2(p)$ are given below in
Equations \eqref{eq:saloff}, \eqref{eq:etap} and \eqref{eq:conv-expo}. 
\begin{corollary}
Under the hypotheses of Theorem~\ref{th:constant},
the process $Z$ admits a unique invariant measure $\eta_\infty$ and 
\[
 \cW_p(\eta_t,\eta_\infty)\leq 
 2^{p+1}M(q,m)^{p/q}C_2(p) \exp\PAR{-\frac{\theta_p}{1+s\theta_p/\rho}t}.
\]
\end{corollary}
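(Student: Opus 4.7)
The plan is to use Theorem~\ref{th:constant} as a contraction-type estimate: first to extract the candidate invariant measure $\eta_\infty$ as a $\cW_p$-limit, then to derive its invariance via a standard Feller argument, and finally to obtain uniqueness and the explicit rate by two further applications of the theorem.

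First I would show that the trajectory $(\eta_t)_{t\geq0}$ is Cauchy in $\cW_p$ whenever the first marginal of $\eta_0$ has $q$th moment at most $m$. Fix $0\leq t\leq t'$ and apply Theorem~\ref{th:constant} with the auxiliary initial law $\tilde\eta_0=\eta_{t'-t}$: by Lemma~\ref{lem:moment2} both $q$th moments are bounded by $M(q,m)$, whence
\[
\cW_p(\eta_t,\eta_{t'})\leq 2^{p+1}M(q,M(q,m))^{p/q}C_2(p)\exp\PAR{-\frac{\theta_p}{1+s\theta_p/\rho}\,t}.
\]
The space $\cP_p$ is complete for $\cW_p$ (this combines the completeness of $W_p$ on $\dR^d$ with the total variation component on the finite factor $E$), so $\eta_t$ admits a limit $\eta_\infty\in\cP_p$. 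The uniform $q$th moment bound of Lemma~\ref{lem:moment2} together with Fatou's lemma (applied to the weak convergence underlying $\cW_p$) ensures that the first marginal of $\eta_\infty$ also has finite $q$th moment.

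For invariance I would invoke the Feller property of the PDMP, which holds under our smoothness assumptions on the vector fields $F^i$ and continuity of the jump rates. Writing $P_s$ for the action of the semigroup on laws, the Markov property gives $P_s\eta_t=\eta_{t+s}$; passing to the limit, $P_s\eta_t\to P_s\eta_\infty$ weakly by the Feller property, while $\eta_{t+s}\to\eta_\infty$ in $\cW_p$ and hence weakly, so $P_s\eta_\infty=\eta_\infty$. Uniqueness is then a one-line consequence of Theorem~\ref{th:constant}: if $\tilde\eta_\infty$ is another invariant law with finite $q$th moment, then $\cW_p(\eta_\infty,\tilde\eta_\infty)=\cW_p(P_t\eta_\infty,P_t\tilde\eta_\infty)\to0$ as $t\to\infty$.

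The announced rate finally comes from one last application of Theorem~\ref{th:constant} with $\tilde\eta_0=\eta_\infty$, so that $\tilde\eta_t\equiv\eta_\infty$ by invariance. The only genuine subtlety is that the initial moment bound needed here is that of $\eta_\infty$, not of $\eta_0$: this is absorbed either by taking $m$ as an a priori upper bound large enough to dominate the iterate of $M(q,\cdot)$ (the latter can be chosen monotone in $m$ in view of Lemma~\ref{lem:moment2}), or equivalently by letting $t'-t\to\infty$ in the Cauchy estimate of the first step and reading off the bound on $\cW_p(\eta_t,\eta_\infty)$ directly from the lower semicontinuity of $\cW_p$. The probabilistic content is entirely packaged in Theorem~\ref{th:constant} and Lemma~\ref{lem:moment2}; the main obstacle is really this bookkeeping step around the constant $M(q,m)$.
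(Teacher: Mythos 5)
Your proof is correct, and since the paper gives no explicit argument for this corollary, the natural comparison is with the standard route, which is exactly what you follow: a Cauchy estimate in $\cW_p$ obtained by feeding $\tilde\eta_0=\eta_{t'-t}$ into Theorem~\ref{th:constant}, completeness of $(\cP_p,\cW_p)$, the Feller property to pass to the limit in $P_s\eta_t=\eta_{t+s}$, and the theorem again for uniqueness and the rate.

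Two small remarks on the bookkeeping you (rightly) flag. First, the way to make the constant come out exactly as stated is slightly simpler than ``dominating the iterate of $M(q,\cdot)$'': once $\eta_\infty$ exists, its first marginal has some fixed $q$th moment $m_\infty\leq M(q,m)<\infty$, a quantity depending only on the model, and the hypothesis of the theorem only requires the initial moments to be \emph{smaller than} $m$; so one may without loss take $m\geq m_\infty$ at the outset, and then the direct application of Theorem~\ref{th:constant} with $\tilde\eta_0=\eta_\infty$ yields $2^{p+1}M(q,m)^{p/q}C_2(p)$ on the nose. Requiring $m\geq M(q,m)$, as your phrasing suggests, is unnecessary and may even be impossible since $M(q,\cdot)$ can grow. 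Second, your Feller-based passage to the limit is the right tool here: a naive triangle inequality $\cW_p(P_s\eta_\infty,\eta_\infty)\leq \cW_p(P_s\eta_\infty,P_s\eta_t)+\cW_p(\eta_{t+s},\eta_\infty)$ does \emph{not} close, because the theorem controls $\cW_p(P_s\eta_\infty,P_s\eta_t)$ only through $e^{-cs}$ and not through $\cW_p(\eta_\infty,\eta_t)$; the Feller property (which holds here since the flows are continuous in $x$ and, under Assumption~\ref{as:ymarkov}, the jump mechanism does not depend on $x$) is genuinely what makes the limit exchange work. Everything else is sound.
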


\subsection{Non constant jump rates}

Let us now turn to the case when the jump rates of $I$ depend on $X$. 
We will assume that  the $a(x,i,j)$ are smooth in the $x$ variable and that each vector field 
$F^i$ has a unique stable point.  
\begin{assumption}\label{ass:lambda}
There exist $\underline a>0$ and $\kappa>0$ 
such that, for any $x,\tilde x\in\dR^d$ and $i,j\in E$, 
\[
a(x,i,j)\geq \underline a
\quad\text{and}\quad
\sum_{j\in E}\ABS{a(x,i,j)-a(\tilde x,i,j)}\leq \kappa\ABS{x-\tilde x}.
\]
\end{assumption}
The lower bound condition insures that the second --- discrete ---  coordinate of $Z$ changes 
often enough (so that the second coordinates of two independent copies of $Z$ coincide 
sufficiently often). This is a rather strong condition that can be relaxed in certain situations.
If for example  $a(x,i,j)=\lambda(x,i)P_{ij}$ as in \eqref{eq:gen-inf},  Lipschitz continuous 
$\lambda$ bounded  from below and  an aperiodic $P$ are sufficient to get a similar rate 
of convergence.  
\begin{assumption}\label{as:dissip}
Assume that there exists  $\alpha>0$ such that, 
\begin{equation}\label{eq:dissip}
\DP{x-\tilde x,F^i(x)-F^i(\tilde x)}\leq -\alpha\ABS{x-\tilde x}^2,
\quad
x,\tilde x\in\dR^d,\ i\in E.
\end{equation}
\end{assumption}
Assumption \ref{as:dissip} ensures that, for any $i\in E$, 
\[
\ABS{\varphi^i_t(x)-\varphi^i_t(\tilde x)}\leq  e^{-\alpha t}\ABS{x-\tilde x},
\quad
x,\tilde x\in \dR^d.
\]  
As a consequence, the vector fields $F^i$ has exactly one critical point 
 $\sigma(i)\in\dR^d$. Moreover it is exponentially stable since, for any 
 $x\in\dR^d$, 
\[
  \ABS{\varphi^i_t(x)-\sigma(i)}\leq e^{-\alpha t}\ABS{x-\sigma(i)}.
\]
In particular, $X$ cannot escape from a sufficiently large ball (this implies that the 
(continuous) functions $a(\cdot,i,j)$ are also bounded from above along the trajectories of $X$). 
More precisely, the following estimate holds. 
\begin{lemma}\label{lem:compact}
 Under Assumptions \ref{ass:lambda} and \ref{as:dissip}, the process $Z$ 
 cannot escape from the compact set $\bar B(0,r)\times E$ 
 where $\bar B(0,r)$ is the (closed) ball centered in $0\in\dR^d$ with radius 
 $r$ given by 
 \begin{equation}\label{eq:defr}
 r=\frac{\max_{i\in E}\ABS{F^i(0)}}{\alpha}.
\end{equation}
 Moreover, if $\ABS{X_0}>r$ then 
 \[
\PAR{\ABS{X_t}^2-r^2}^+\leq e^{-\alpha t}\PAR{\ABS{X_0}^2-r^2}^+.
\]
In particular the support of any invariant measure is included in 
$\bar B(0,r)$. 
 \end{lemma}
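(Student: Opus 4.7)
The plan is to compare $|X_t|^2$ to $r^2$ via a differential inequality that holds along the deterministic pieces of the trajectory and passes through jumps by continuity, then deduce the support statement by a standard invariance argument.

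Between two consecutive jump times of $I$, say on $[T_k,T_{k+1})$ with $I_t\equiv i$, the first coordinate evolves as $X_t'=F^i(X_t)$, so
\[
\frac{d}{dt}|X_t|^2 = 2\DP{X_t,F^i(X_t)}
\leq -2\alpha|X_t|^2 + 2\DP{X_t,F^i(0)}
\leq -2\alpha|X_t|^2 + 2\alpha r|X_t|,
\]
where the first inequality uses Assumption \ref{as:dissip} with $\tilde x=0$ and the second uses $|F^i(0)|\leq \alpha r$ (which is exactly the definition \eqref{eq:defr} of $r$) together with Cauchy--Schwarz. Rewriting the right-hand side as $-2\alpha|X_t|(|X_t|-r)$ and observing that, whenever $|X_t|\geq r$, one has $2|X_t|\geq|X_t|+r$, I get
\[
\frac{d}{dt}\bigl(|X_t|^2-r^2\bigr) \leq -\alpha\bigl(|X_t|^2-r^2\bigr)
\qquad\text{as long as }|X_t|\geq r.
\]

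Now I would argue that the function $V_t:=(|X_t|^2-r^2)^+$ satisfies $V_t\leq e^{-\alpha t}V_0$ for all $t\geq 0$. The key point is that $X$ is continuous through the jumps of $I$ (only the driving vector field changes), so $V$ is continuous on $[0,\infty)$ and the differential inequality above holds piecewise on each $[T_k,T_{k+1})$, irrespective of the current value of $I$. If $|X_0|\leq r$, a standard argument shows that $V_t$ cannot leave zero: any attempt would produce a time $\tau$ at which $V_\tau=0$ but $V$ is strictly positive and decreasing just after $\tau$, contradiction. If $|X_0|>r$, then on the (possibly infinite) interval $[0,\tau)$ where $|X_s|>r$, Gronwall gives $V_t\leq e^{-\alpha t}V_0$; at $\tau$ (if finite) $V_\tau=0$ and the preceding case applies thereafter. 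This yields the claimed inequality, and in particular $\bar B(0,r)\times E$ is stable under the dynamics.

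Finally, for any invariant probability measure $\eta_\infty$, starting from $Z_0\sim \eta_\infty$ the stationarity gives $\dE[V(X_t)]=\dE[V(X_0)]$ for every $t\geq 0$, while the pointwise estimate yields $\dE[V(X_t)]\leq e^{-\alpha t}\dE[V(X_0)]$. Letting $t\to\infty$ forces $\dE[V(X_0)]=0$, so $|X_0|\leq r$ almost surely and $\operatorname{supp}\eta_\infty\subset \bar B(0,r)\times E$. I do not anticipate a serious obstacle; the only subtle point is checking that jumps of $I$ do not spoil the differential inequality, which is resolved by the fact that $X$ has continuous sample paths.
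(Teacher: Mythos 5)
Your argument is sound and follows essentially the same route as the paper: set $\tilde x=0$ in the dissipation inequality, obtain a linear differential inequality for $|X_t|^2$, and apply Gronwall, noting that $X$ has continuous sample paths so the piecewise ODE estimate persists across jumps of $I$. The algebra differs slightly: you apply Cauchy--Schwarz and factor $-2\alpha|X_t|(|X_t|-r)\leq -\alpha(|X_t|^2-r^2)$ on the set $\{|X_t|\geq r\}$, which forces you into a two-case argument around the positive part; the paper instead bounds $\DP{F^i(0),x}\leq\varepsilon|x|^2+M/(4\varepsilon)$ by Young's inequality, solves the resulting affine Gronwall inequality, and sets $\varepsilon=\alpha/2$, which yields the cleaner unconditional bound $|X_t|^2-r^2\leq e^{-\alpha t}(|X_0|^2-r^2)$ from which the positive-part estimate follows immediately without case analysis. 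Both are correct and give the same constants.

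One genuine (though easily repaired) gap is in the final support argument. You write that stationarity gives $\dE[V(X_t)]=\dE[V(X_0)]$ and the pointwise bound gives $\dE[V(X_t)]\leq e^{-\alpha t}\dE[V(X_0)]$, so letting $t\to\infty$ forces $\dE[V(X_0)]=0$. This is only conclusive when $\dE[V(X_0)]<\infty$, which is not granted a priori: nothing yet rules out an invariant measure with infinite second moment, in which case the displayed chain reads $\infty\leq\infty$. The fix is to run the same reasoning at the level of tail probabilities rather than expectations: for any $R>0$, the pointwise bound gives $\dP\PAR{V(X_t)>R}\leq \dP\PAR{V(X_0)>Re^{\alpha t}}$, while stationarity gives $\dP\PAR{V(X_t)>R}=\dP\PAR{V(X_0)>R}$; since $V(X_0)$ is a.s.\ finite, letting $t\to\infty$ yields $\dP\PAR{V(X_0)>R}=0$ for every $R>0$, hence $V(X_0)=0$ a.s., i.e.\ $\operatorname{supp}\mu_\infty\subset\bar B(0,r)$. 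With that replacement your proof is complete.
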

Let us now state our main result which establishes the quantitative 
exponential ergodicity of the process $Z$ under Assumptions 
\ref{ass:lambda} and \ref{as:dissip}.
\begin{theorem}\label{th:nonconstant}
Assume that Assumptions \ref{ass:lambda} and \ref{as:dissip} hold
and that the supports of $\mu_0$ and $\tilde \mu_0$ are included 
in the ball $\bar B(0,r)$ where $r$ is given by \eqref{eq:defr}. Then there exist 
positive constants $c$ and $\gamma$ such that
\[
\cW_1\PAR{\eta_t,\tilde \eta_t}\leq %
(1+2r)  \PAR{1+c t}\exp\PAR{-\frac{\alpha }{1+\alpha/\gamma} t}
\]
where $\alpha$ is derived from \eqref{eq:dissip}.
\end{theorem}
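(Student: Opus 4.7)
My plan is to prove the bound by explicit coupling. Since the supports of $\mu_0$ and $\tilde \mu_0$ are included in $\bar B(0,r)$, Lemma~\ref{lem:compact} guarantees that any Markovian coupling $(Z_t,\tilde Z_t)$ of two copies of the PDMP remains in $\bar B(0,r)\times E$ for all time. Then
\[
\cW_1(\eta_t,\tilde\eta_t)\leq \dE\SBRA{\ABS{X_t-\tilde X_t}}+\dP(I_t\neq\tilde I_t),
\]
and it suffices to bound the right-hand side.

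The first step is to build the coupling by common Poisson thinning at a fixed intensity $\bar a$ dominating $\sum_j a(x,i,j)$ on the compact set $\bar B(0,r)\times E$. At each candidate jump time one updates the discrete pair via a maximal coupling of the two marginal jump laws on $E$. When $I_t=\tilde I_t=i$, Assumption~\ref{ass:lambda} bounds the total variation distance between these two laws by $\kappa\ABS{X_t-\tilde X_t}/(2\bar a)$, so the rate of \emph{decoupling} is at most $\kappa\ABS{X_t-\tilde X_t}$. When $I_t\neq\tilde I_t$, the uniform lower bound $a(\cdot,\cdot,j)\geq \underline a$ allows one to couple both jumps onto a common target state with rate bounded below by some $\gamma>0$ (of order $\ABS{E}\underline a$).

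The second step is to derive coupled differential inequalities. With $M:=\max_{i\in E}\sup_{\bar B(0,r)}\ABS{F^i}$, Assumption~\ref{as:dissip} gives the pathwise bound
\[
\frac{d}{dt}\ABS{X_t-\tilde X_t}\leq -\alpha \ABS{X_t-\tilde X_t}+2M\ind_{I_t\neq\tilde I_t},
\]
the residual $2M$ appearing only when $I_t\neq\tilde I_t$ through $F^i(\tilde X)-F^{\tilde i}(\tilde X)$, and $\ABS{X-\tilde X}$ is continuous across jumps. Setting $q_t:=\dE[\ABS{X_t-\tilde X_t}]$ and $p_t:=\dP(I_t\neq\tilde I_t)$, taking expectations and combining with the coupling estimate from the first step yields
\[
\dot q_t\leq -\alpha q_t+2Mp_t,\qquad \dot p_t\leq -\gamma p_t+\kappa q_t.
\]

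The last step is to solve this $2\times 2$ linear system. Setting $\beta:=\alpha/(1+\alpha/\gamma)$ and considering the Lyapunov function $V_t:=q_t+c_1 p_t$ with $c_1:=(\alpha-\beta)/\kappa$, the coefficient of $q_t$ in $\dot V_t+\beta V_t$ vanishes by construction, leaving a residual controlled only by $p_t$. A Gronwall-type argument (equivalently, the asymptotic form $e^{At}\sim(I+tN)e^{\beta t}$ appropriate when the two eigenvalues of the matrix of coefficients nearly coincide) then produces
\[
q_t+p_t\leq (q_0+p_0)(1+ct)e^{-\beta t}
\]
for some $c>0$ depending on $\alpha,\gamma,\kappa,M$. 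Using $q_0\leq 2r$ and $p_0\leq 1$ concludes the proof. The main obstacle is precisely this last step: obtaining the harmonic-mean rate $\beta=\alpha\gamma/(\alpha+\gamma)$ with only a linear-in-$t$ correction requires the calibration of $c_1$ that exactly cancels the $q$-coefficient, since a direct eigenvalue analysis of the matrix would yield a less transparent rate. A secondary subtlety is the rigorous justification of the infinitesimal inequalities for $\dot q_t$ and $\dot p_t$ via Dynkin's formula for the generator of the coupled PDMP, and the verification that the joint coupling construction is indeed a well-defined PDMP on $(\bar B(0,r)\times E)^2$.
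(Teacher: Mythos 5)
Your overall strategy — couple the two processes, track the Wasserstein distance of the continuous parts and the mismatch probability of the discrete parts, and derive a closed system of inequalities — is the right general shape, and your coupling construction is close in spirit to the paper's (the generator $\gen$ in Section~3 implements exactly the ``maximal coupling of jump laws when $I=\tilde I$, forced re-coalescence when $I\neq\tilde I$'' idea). But the last step, solving the system
\[
\dot q_t\leq -\alpha q_t+2Mp_t,\qquad \dot p_t\leq -\gamma p_t+\kappa q_t,
\]
does not give the stated bound, and in fact need not give decay at all. The matrix of coefficients has characteristic polynomial $\lambda^2+(\alpha+\gamma)\lambda+(\alpha\gamma-2M\kappa)$; its eigenvalues $\lambda_\pm=\tfrac{1}{2}\bigl(-(\alpha+\gamma)\pm\sqrt{(\alpha-\gamma)^2+8M\kappa}\bigr)$ are always real and \emph{distinct} (since $8M\kappa>0$), so no $(1+ct)$ correction arises from a near-double eigenvalue. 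Worse, the slower eigenvalue $\lambda_+$ is nonnegative as soon as $2M\kappa\geq \alpha\gamma$, in which case the linear system certifies no convergence whatsoever. Correspondingly, your Lyapunov function $V=q+c_1p$ with $c_1=(\alpha-\beta)/\kappa$ only satisfies $\dot V\leq -\beta V$ if $2M\leq c_1(\gamma-\beta)=\alpha^2\gamma^2/\bigl(\kappa(\alpha+\gamma)^2\bigr)$, a smallness condition that Theorem~\ref{th:nonconstant} does not require. There is no choice of $\beta>0$ and $c_1$ making the Gronwall argument close in general.

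The missing ingredient is that $|X_t-\tilde X_t|$ is \emph{uniformly} bounded by $D=2r$ (Lemma~\ref{lem:compact}), which you mention but never exploit. In the paper, this pointwise bound is essential: the scalar comparison process $U_t$ lives on $[0,D]\cup\{D+1\}$, decays at rate $\alpha$ on $[0,D]$, jumps to $D+1$ at rate $\kappa x$, and drops back to $D$ at rate $b$. Crucially, after each recoupling it \emph{resets} to $D$ rather than to whatever large value a linear ODE would predict; this resetting is what makes the scheme work for any $M,\kappa$. The paper then analyses the renewal structure of $U$ via the Laplace transform of the last visit time to $D$, producing the rate $\alpha\gamma/(\alpha+\gamma)$ with $\gamma$ involving $p=e^{-D\kappa/\alpha}>0$ — always positive, no matter how large $\kappa$ or $M$. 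The $(1+ct)$ factor comes from the saddle-point optimisation of the Laplace bound, not from a confluent-eigenvalue phenomenon. To repair your argument you would need to replace the linear ODE for $q_t$ by a comparison that caps $|X_t-\tilde X_t|$ at $D$ after every re-coalescence; once you do that you are essentially reconstructing the paper's companion process.
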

The constants obtained in the proof are the following
\[
\gamma=
\frac{(\alpha+b)-
\sqrt{(\alpha+b)^2-4bp\alpha }}{2}
\quad \text{and}\quad%
c=\frac{\alpha}{\alpha+\gamma} 
\frac{ep\alpha b }
{\sqrt{(\alpha+b)^2-4bp\alpha }},
\]
with $p=e^{-2r\kappa/\alpha}$ and $e=\exp(1)$ and where $b$ depends on the 
coalescence time of two independent processes defined on $E$ as the second 
coordinates of independent copies of $Z$. To obtain this result we couple two 
paths of our process and compare their distance to a real-valued process that 
can pass instantly (less and less often) from small to large values (2$r$+1). This 
may seem rough, but in the general case, nothing much better can be done : if 
one of the flows is very strongly attractive, two paths may indeed very rapidly 
diverge. For particular examples, or under additional assumptions on the flows, 
it must be possible to get better rates.

\begin{corollary}
Under the hypotheses of Theorem~\eqref{th:nonconstant}, the process $Z$
admits a unique invariant measure $\eta_\infty$ and 
\[
 \cW_1\PAR{\eta_t,\eta_\infty}\leq %
(1+2r)  \PAR{1+c t}\exp\PAR{-\frac{\alpha }{1+\alpha/\gamma} t}.
\]
\end{corollary}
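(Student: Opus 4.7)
The strategy is to deduce both existence and the quantitative bound directly from Theorem~\ref{th:nonconstant} and Lemma~\ref{lem:compact}, via a standard Cauchy argument. Denote by $\cP^r$ the set of probability measures on $\dR^d\times E$ whose first marginal is supported in the closed ball $\bar B(0,r)$. By Lemma~\ref{lem:compact}, $\cP^r$ is stable under the semigroup $(P_t)_{t\geq 0}$ of $Z$, and since $\bar B(0,r)\times E$ is compact, $(\cP^r,\cW_1)$ is a complete metric space.

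For existence, fix any $\eta_0\in\cP^r$ and $s\geq 0$. Applying Theorem~\ref{th:nonconstant} with initial distributions $\eta_0$ and $\tilde\eta_0:=\eta_s\in\cP^r$, and using the Markov property to rewrite $\tilde\eta_t=\eta_{t+s}$, we obtain
\[
\cW_1(\eta_t,\eta_{t+s})\leq (1+2r)(1+ct)\exp\PAR{-\frac{\alpha}{1+\alpha/\gamma}\,t},
\]
uniformly in $s\geq 0$. The right-hand side tends to $0$ as $t\to\infty$, so $(\eta_t)_{t\geq 0}$ is a Cauchy family in $\cW_1$ and converges to some $\eta_\infty\in\cP^r$. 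Invariance is obtained by passing to the limit in $\int f\,d\eta_{t+u}=\int P_u f\,d\eta_t$ for every $f\in C_b(\dR^d\times E)$ and every $u\geq 0$: $\cW_1$-convergence implies weak convergence, and the Feller property of $(P_u)$ (which follows from the smoothness of the flows $\varphi^i$ and the continuity and boundedness of the jump rates on $\bar B(0,r)\times E$) gives $\int f\,d\eta_\infty=\int P_u f\,d\eta_\infty$.

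For uniqueness, observe that any invariant probability measure lies in $\cP^r$ by Lemma~\ref{lem:compact}; applying Theorem~\ref{th:nonconstant} to two invariant measures $\eta_\infty,\tilde\eta_\infty$ yields $\cW_1(\eta_\infty,\tilde\eta_\infty)=\cW_1(\eta_\infty P_t,\tilde\eta_\infty P_t)\to 0$, so $\eta_\infty=\tilde\eta_\infty$. The displayed bound of the corollary is then immediate: since $\eta_\infty\in\cP^r$, one may take $\tilde\eta_0=\eta_\infty$ in Theorem~\ref{th:nonconstant}; invariance gives $\tilde\eta_t=\eta_\infty$ for all $t\geq 0$, and the stated estimate follows verbatim. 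The only mildly technical point in the argument is the justification of completeness of $(\cP^r,\cW_1)$ and of the Feller property used for invariance; both are standard in this compact setting with smooth flows and Lipschitz rates, but warrant a brief one-line check in the final write-up.
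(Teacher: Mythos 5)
The paper does not supply a proof of this corollary, treating it as an immediate consequence of Theorem~\ref{th:nonconstant}; your argument is the natural one and it is correct. The key observations are all in place: $\cW_1$ is a genuine Wasserstein-$1$ metric for the distance $|x-\tilde x|+\ind_{i\ne \tilde i}$ on $\dR^d\times E$, so on the compact state space $\bar B(0,r)\times E$ it metrizes weak convergence and the probability measures there form a compact, hence complete, metric space; Lemma~\ref{lem:compact} gives both the stability of $\cP^r$ under $(P_t)$ and the inclusion of the support of any invariant measure in $\bar B(0,r)\times E$; and the uniform-in-$s$ decay from the theorem then gives the Cauchy property, hence convergence to some $\eta_\infty\in\cP^r$. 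The only points you flag as needing a line of justification (completeness of $(\cP^r,\cW_1)$ and the Feller property of the PDMP semigroup with bounded Lipschitz rates and smooth fields) are indeed routine in this compact setting, and your final step of plugging $\tilde\eta_0=\eta_\infty$ into the theorem reproduces the stated bound verbatim.
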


Section~\ref{sec:constant}
is dedicated to the proof of Theorem \ref{th:constant}. 
Theorem~\ref{th:nonconstant} is established in Section~\ref{sec:nonconstant}. 
As an example of the applicability of this result, we show
in Section~\ref{sec:Example} that the stochastic version of the Morris--Lecar model
introduced in~\cite{WTP10} satisfies our assumptions, so that explicit bounds 
may be obtained for its convergence to equilibrium. 

\section{Constant jump rates}
\label{sec:constant}

The aim of this section is to prove Theorem \ref{th:constant}. 
Assumption \ref{as:ymarkov} ensures that ${(I_t)}_{t\geq 0}$ is an irreducible 
Markov process on the finite space $E$. Its generator $A$ is the matrix defined 
by $A(i,i)=-\lambda(i)$ and $A(i,j)=\lambda(i)P(i,j)$ for 
$i\neq j$. Let us denote by $\nu_\infty$ its unique invariant probability 
measure. The study of the long time behavior of $I$ is classical: since $I$ 
takes its values in a finite set, it is quite simple to construct a coalescent 
coupling of two processes starting from different points. 

\begin{lemma}[\cite{MR1490046}]\label{lem:rho}
 If Assumption \ref{as:ymarkov} holds then there exists $\rho>0$ such that 
 for any $i,j\in E$, 
\begin{equation}\label{eq:saloff}
 \dP(T>t\vert I_0=i,\ \tilde I_0=j)\leq e^{-\rho t} 
\end{equation}
 where ${(I_t)}_{t\geq 0}$ and ${(\tilde I_t)}_{t\geq 0}$ are two independent 
 Markov processes with infinitesimal generator $A$ starting respectively 
 at $i$ and $j$ and $T=\inf\BRA{t\geq 0\, :\, I_t=\tilde I_t}$ is the first 
 intersection time. 
 \end{lemma}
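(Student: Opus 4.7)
The plan is to consider the product process $(I_t,\tilde I_t)_{t\geq 0}$ on the finite state space $E\times E$: by independence of the two copies it is Markov with generator $A\otimes \mathrm{Id} + \mathrm{Id}\otimes A$, and $T$ is exactly the hitting time of the diagonal $\Delta=\BRA{(i,i)\,:\,i\in E}$. Because $E\times E$ is finite, everything reduces to showing that
\[
u(t_0):=\sup_{(i,j)\in E\times E}\dP\PAR{T>t_0\,\big\vert\,I_0=i,\tilde I_0=j}\;<\;1
\]
for some fixed $t_0>0$; the desired exponential tail then follows from a standard submultiplicativity bootstrap.

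To establish this uniform estimate, fix any reference state $k\in E$ and any $t_0>0$. A classical feature of an irreducible continuous-time Markov chain on a finite set is that the transition kernel $p_{t_0}(i,k)=\dP(I_{t_0}=k\,\vert\,I_0=i)$ is strictly positive for every $i\in E$: this is immediate from the series expansion $e^{t_0 A}=\sum_{n\geq 0}(t_0 A)^n/n!$ combined with the fact that some power of the underlying discrete jump matrix connects $i$ to $k$. Taking the minimum over the finite set $E$ gives $q:=\min_{i\in E}p_{t_0}(i,k)>0$. By independence of the two copies,
\[
\dP\PAR{I_{t_0}=k,\ \tilde I_{t_0}=k\,\big\vert\,I_0=i,\tilde I_0=j}\;\geq\;q^2,
\]
and on this event necessarily $T\leq t_0$. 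Hence $u(t_0)\leq 1-q^2=:\alpha<1$.

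Conditioning on $\cF_{t_0}$ and applying the Markov property of the product chain at time $t_0$ gives $u(t+t_0)\leq \alpha\,u(t)$ for every $t\geq 0$, so by induction $u(nt_0)\leq\alpha^n$. For an arbitrary $t\geq 0$, writing $n=\PENT{t/t_0}$ and using the obvious monotonicity of $u$ yields $u(t)\leq \alpha^n\leq \alpha^{-1}e^{-\rho t}$ with $\rho:=-t_0^{-1}\log\alpha>0$. The harmless multiplicative constant $\alpha^{-1}>1$ is then absorbed by slightly decreasing $\rho$ (using $\dP(T>t)\leq 1$ to dominate small times), producing the stated bound. No real obstacle arises; the only step worth isolating is the strict positivity of $p_{t_0}(i,k)$, which is the classical content of irreducibility on a finite state space.
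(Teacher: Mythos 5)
The paper does not actually prove this lemma; it is stated as a citation to~\cite{MR1490046}, so there is no in-paper argument to compare against. Your approach --- pass to the product chain on $E\times E$, observe that $T$ is the hitting time of the diagonal, establish a Doeblin-type minorization via the strict positivity of $p_{t_0}(\cdot,k)$, and bootstrap by submultiplicativity --- is the standard and correct way to obtain exponential decay of the coalescence time, and your argument does yield
\[
\sup_{i,j}\ \dP\bigl(T>t\,\big\vert\,I_0=i,\ \tilde I_0=j\bigr)\ \leq\ C\,e^{-\rho t},\qquad C=\alpha^{-1}>1,\ \ \rho=-t_0^{-1}\log\alpha>0.
\]
Up to that point, everything is sound: positivity of $p_{t_0}(i,k)$ for \emph{every} $t_0>0$ is indeed the correct consequence of irreducibility for a continuous-time chain on a finite set, and the conditioning step $u(t+t_0)\leq u(t_0)\,u(t)$ is a clean use of the Markov property.

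The final step, however, is wrong: the multiplicative constant $C>1$ cannot be ``absorbed by slightly decreasing $\rho$'' using $\dP(T>t)\leq 1$ on small times. For $t>0$ and any $\rho'>0$ one needs $\dP(T>t)\leq e^{-\rho' t}<1$, and the crude bound $\dP(T>t)\leq 1$ gives nothing there. In fact, the constant-free inequality~\eqref{eq:saloff} is not true in general. If the product chain started at $(i,j)$ cannot reach the diagonal in a single jump, then $\dP(T\leq t)=O(t^2)$ as $t\to 0^+$, so $\dP(T>t)=1-O(t^2)$, which exceeds $e^{-\rho t}=1-\rho t+O(t^2)$ for all sufficiently small $t>0$, whatever $\rho>0$. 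A concrete example: take $E=\{1,2,3,4\}$ with the directed-cycle rates $1\to 2\to 3\to 4\to 1$ at rate $1$ (irreducible), and start at $(i,j)=(1,3)$; neither available jump, to $(2,3)$ or to $(1,4)$, lands on the diagonal, so the minimum number of jumps to coalesce is two. Thus the form your argument correctly establishes, $\dP(T>t)\leq C e^{-\rho t}$, is what should appear in the lemma. This is a cosmetic imprecision in the paper's statement: in the proof of Theorem~\ref{th:constant} only an exponential tail up to a multiplicative constant is needed, and that constant would simply merge with the other prefactors $2^{p+1}M(q,m)^{p/q}C_2(p)$.
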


\begin{remark}
If $E=\BRA{1,2}$, then the first intersection time is distributed as an 
exponential random variable with parameter $\lambda(1)+\lambda(2)$ and 
Equation~\eqref{eq:saloff} holds with $\rho=\lambda(1)+\lambda(2)$. 
\end{remark}

The proof of Theorem~\ref{th:constant} is made of two steps. Firstly we 
couple two processes starting respectively from $(x,i)$ and $(\tilde x,i)$ 
to get a simple estimate as time goes to infinity. Then we use this estimate 
and Lemma~\ref{lem:rho} to manage the general case.

\subsection{Moments estimates}

In this section we prove Lemma \ref{lem:moment2} and get an $L^p$ 
estimate for $\ABS{X_t}$. For any $p\geq 2$ and $\varepsilon>0$,
\begin{align*}
 \frac{d}{dt}\ABS{X_t}^p&=p\ABS{X_t}^{p-2}\DP{X_t,F^{I_t}(X_t)}\\
 &=p\ABS{X_t}^{p-2}\DP{X_t,F^{I_t}(X_t)-F^{I_t}(0)}
 +p\ABS{X_t}^{p-2}\DP{X_t,F^{I_t}(0)}\\
&\leq -(p\alpha(I_t)-\varepsilon)\ABS{X_t}^p+C(p,\varepsilon).
\end{align*}
Thanks to Gronwall's Lemma, we get that 
\[
\dE\PAR{\ABS{X_t}^p}\leq C(p,\varepsilon)\int_0^t\!
\dE\PAR{e^{-\int_s^t\!(p\alpha(I_u)-\varepsilon)\,du}}\,ds
+\dE\PAR{\ABS{X_0}^p}\dE\PAR{e^{-\int_0^t\!(p\alpha(I_u)-\varepsilon)\,du}}.
\]
\begin{remark}
 A similar estimate can be obtained for $p\in[1,2)$ using a regularization of 
 the application $x\mapsto \ABS{x}^p$. 
\end{remark}
The right hand side depends only on $\dE(\ABS{X_0}^p)$ and 
${(I_s)}_{0\leq s\leq t}$. 
Thus, it is sufficient to investigate the behavior of $e{(p,t)}$ 
defined for any $t\geq 0$ by 
\[
e{(p,t)}=%
\max_{i\in E}\dE_i\PAR{\exp\PAR{-\int_0^t\!p\alpha(I_u)\,du}}.
\]
This study has been already performed in~\cite{bgm}. 
Let us state the precise result. We denote by $A_p$ the matrix $A-pB$ 
where $A$ is the infinitesimal generator of $I$ and $B$ is the 
diagonal matrix with diagonal $(\alpha(1),\ldots,\alpha(n))$ 
and associate to $A_p$ the quantity
\begin{equation}\label{eq:etap}
\theta_p:=-\max_{\gamma\in\mathrm{Spec}(A_p)} \mathrm{Re\ }\gamma. 
\end{equation}
The long time behavior of $e{(p,t)}$ is characterised by $\theta_p$ as 
follows. For any $p>0$, there exist 
$0<C_{1}(p) <1< C_{2}(p) <+\infty$ such that, for any  
any $t>0$,
\begin{equation}
\label{eq:conv-expo}
C_{1}(p)e^{-\theta_{p}t} \leq 
e{(p,t)}
\leq C_{2}(p)e^{-\theta_{p}t}.
\end{equation}
Moreover the following dichotomy holds:
 \begin{itemize}
 \item if $\underline \alpha\geq 0$, then $\theta_p>0$ for all $p>0$,
 \item if $\underline\alpha< 0$, there is 
$\kappa\in(0,\min\{A_{ii}/\alpha(i)\,:\, \alpha(i)<0\})$ such that 
$\theta_p>0$ for $p<\kappa$ and $\theta_p<0$ for $p>\kappa$.
\end{itemize}
See \cite{bgm} for further details.

If $p<\kappa$, then $t\mapsto \dE(\ABS{X_t}^p)$ is bounded as soon as 
 $\dE(\ABS{X_0}^p)$ is finite. This concludes the proof of Lemma~\ref{lem:moment2}.

\subsection{Convergence rate}

Let us now get the upper bound for the Wasserstein distance $W_p$ for 
some $p<\kappa$. Assume firstly that the initial law are two Dirac 
masses at $(x,i)$ and $(\tilde x,i)$. It is easy to construct a good coupling 
of the two processes $(X,I)$ and $(\tilde X,\tilde I)$: since the jump rates 
of $I$ do not depend on $X$, one can choose $I$ and $\tilde I$ equal! 
As a consequence, for any $p\geq 2$, 
\begin{align*}
 \frac{d}{dt}\vert X_t-\tilde X_t\vert ^p&=
 p\vert X_t-\tilde X_t\vert ^{p-2}
 \DP{X_t-\tilde X_t,F^{I_t}(X_t)-F^{I_t}(\tilde X_t)}\\
 &\leq -p\alpha (I_t) \vert X_t-\tilde X_t\vert ^p.
\end{align*}
As a consequence, 
\begin{align*}
\dE\PAR{\vert X_t-\tilde X_t\vert ^p}&\leq  
\dE_i\PAR{\exp\PAR{-p\int_0^t\! \alpha (I_s)\,ds}}\ABS{x-\tilde x}^p\\
&\leq e^{-\theta_p t}\ABS{x-\tilde x}^p.
\end{align*}

Let us now turn to a general initial condition. Choose $(x,i)$ 
and $(\tilde x,j)$ in $\dR^d\times E$ and consider 
the following coupling: the two processes evolve 
independently until the intersection time $T$ of the second coordinates. 
Then, $I$ and $\tilde I$ are chosen to be equal for ever: 
\[
\dP(I_t\neq \tilde I_t)=\dP(T>t)\leq e^{-\rho t}
\]
by~\eqref{eq:saloff}. This term will turn to be negligible in $\cW_p(\eta_t,\tilde\eta_t)$. 
Now fix $t>0$ and $\beta\in(0,1)$ and decompose:
\[
\dE\PAR{\vert X_t-\tilde X_t\vert ^p}=
\dE\PAR{\vert X_t-\tilde X_t\vert ^p\ind_\BRA{T>\beta t}}+
\dE\PAR{\vert X_t-\tilde X_t\vert ^p\ind_\BRA{T\leq \beta t}}.
\]
Choose $q\in(p,\kappa)$ and define $r=q/p$ and $s$ as the conjugate of $r$. 
By H\"older's inequality and Lemma~\ref{lem:moment2}, we get
\begin{align*}
\dE\PAR{\vert X_t-\tilde X_t\vert ^p\ind_\BRA{T>\beta t}}&\leq 
\dE\PAR{\vert X_t-\tilde X_t\vert ^q}^{p/q} \dP\PAR{T\geq \beta t}^{1/s}\\
&\leq 2^p M(q,m)^{p/q} e^{-(\beta \rho/s)t }. 
\end{align*}
Moreover,
\begin{align*}
 \dE\PAR{\vert X_t-\tilde X_t\vert ^p\ind_\BRA{T\leq \beta t}}&=
 \dE\PAR{\vert X_T-\tilde X_T\vert ^p \dE_{I_T}\PAR{\exp\PAR{-p\int_T^t\!\alpha(I_s)\,ds}}\ind_\BRA{T\leq \beta t}}\\
 &\leq 2^p M(p,m) C_2(p)e^{-\theta_p(1-\beta)t}.
\end{align*}
At last, one has to optimize over $\beta\in(0,1)$. With 
\[
\beta=\frac{\theta_p}{\theta_p+\rho/s},
\] 
one has 
\[
\dE\PAR{\vert X_t-\tilde X_t\vert ^p}\leq 
2^{p+1}M(q,m)^{p/q}C_2(p) \exp\PAR{-\frac{\rho/s}{\theta_p+\rho/s}\theta_pt}.
\]
This concludes the proof of Theorem~\ref{th:constant}.

\section{Non constant jump rates}
\label{sec:nonconstant}

Let us now turn to the proof of Theorem \ref{th:nonconstant}. 
In this section we do not assume that the jump rates depend only 
on the discrete component. Thus, the coupling is more subtle 
since once $I$ and $\tilde I$ are equal, they can go apart with 
a positive rate. The main idea is the following.
If $I$ and $\tilde I$ are equal, the distance between $X$ and $\tilde X$ 
decreases exponentially fast and then it should be more and more 
easier to make the processes $I$ and $\tilde I$ jump simultaneously 
(since the jump rates are Lipschitz functions of $X$). This idea has been 
used in a different framework in \cite{CMP,BCGMZ}.

This section is organized as follows. Firstly we prove 
Lemma~\ref{lem:compact} that ensures that the process 
$X$ cannot escape from a sufficiently large ball. In particular, the support 
of the invariant law of $X$ is included in this ball. Then we construct the 
coupling of two processes $Z=(X,I)$ and $Z=(\tilde X,\tilde I)$ driven by the 
same infinitesimal generator \eqref{eq:gen-inf} with different initial conditions. 
At last we compare the distance between $Z$ and $\tilde Z$ to an companion
process that goes to 0 exponentially fast.  

\begin{proof}[Proof of Lemma \ref{lem:compact}]
  Setting $\tilde x=0$ in \eqref{eq:dissip} ensures that, for 
 $\varepsilon\in(0,\alpha)$,
\[
\DP{F^i(x),x}\leq -\alpha\ABS{x}^2+\DP{F^i(0),x}\leq %
-(\alpha-\varepsilon)\ABS{x}^2+M/(4\varepsilon),
\]
if $M=\max_{i\in E}\ABS{F^i(0)}^2$. In other 
words, 
\[
\ABS{X_t}^2-\ABS{X_s}^2=
\int_s^t\! 2 \DP{F^{I_u}(X_u),X_u}\,du\leq 
-2(\alpha-\varepsilon) \int_s^t\ABS{X_u}^2\,du+\frac{M}{2\varepsilon}(t-s). 
\]
As a consequence, 
\[
\ABS{X_t}^2\leq 
\frac{M}{4\varepsilon(\alpha-\varepsilon)}(1-e^{-2(\alpha-\varepsilon) t})
+\ABS{X_0}^2e^{-2(\alpha-\varepsilon) t}.
\]
Choosing $\varepsilon=\alpha/2$ ensures that 
\[
\PAR{\ABS{X_t}^2-\frac{M}{\alpha^2}}^+
\leq e^{-\alpha t}\PAR{\ABS{X_0}^2-\frac{M}{\alpha^2}}^+.
\]
In particular, $X$ cannot escape from the centered closed ball with 
radius $r=\sqrt{M}/\alpha$.
\end{proof}

\subsection{The coupling}

Let us construct a Markov process on $(\dR^d\times E)^2$ with 
marginals driven by \eqref{eq:gen-inf} starting respectively from 
$(x,i)$ and $(\tilde x,j)$. This is done \emph{via} its infinitesimal 
generator $\gen$ which is defined as follows: 
\begin{itemize}

\item if $i\neq j$ 
\begin{align*}
\gen f(x,i,\tilde x,j)=&%
\DP{F^i(x),\nabla_xf(x,i,\tilde x,j)}+\DP{F^j(\tilde x),\nabla_{\tilde x}f(x,i,\tilde x,j)}\\
&+\sum_{i'\in E}a(x,i,i') (f(x,i',\tilde x,j)-f(x,i,\tilde x,j))\\
&+\sum_{j'\in E}a(\tilde x,j,j') (f(x,y,\tilde x,j')-f(x,y,\tilde x,j)). 
\end{align*}

\item if $i=j$:
\begin{align*}
\gen f(x,i,\tilde x, j)=&%
\DP{F^i(x),\nabla_xf(x,i,\tilde x,i)}%
+\DP{F^i(\tilde x),\nabla_{\tilde x}f(x,i,\tilde x,i)}\\%
&+ \sum_{i'\in E}(a(x,i,i')\wedge a(\tilde x,i,i')) (f(x,i',\tilde x,i')-f(x,i,\tilde x,i))\\
&+ \sum_{i'\in E}(a(x,i,i')-a(\tilde x,i,i'))^+\ (f(x,i',\tilde x,i)-f(x,i,\tilde x,i))\\
&+ \sum_{i'\in E}(a(x,i,i')-a(\tilde x,i,i'))^-\ (f(x,i,\tilde x,i')-f(x,i,\tilde x,i)),
\end{align*}
\end{itemize}
where $(\cdot)^+$ and $(\cdot)^-$ stand respectively for the positive and negative 
parts. Notice that if $f$ depends only on $(x,i)$ or on $(\tilde x,j)$, then 
$\gen f=Af$. Let us explain how this coupling works. When $I$ and $\tilde I$ 
are different, the two processes $(X,I)$ and $(\tilde X,\tilde I)$ 
evolve independently. If $I=\tilde I$ then two jump processes are in 
competition: a single jump \emph{vs} two simultaneous 
jumps. The rate of arrival of a single jump equals to 
$\sum_{i'\in E}\ABS{a(x,i,i')-a(\tilde x,i,i')}$. It is bounded above by 
$\kappa \ABS{x-\tilde x}$. The rate of arrival of a simultaneous jump is 
given by $\sum_{i'\in E}(a(x,i,i')\wedge a(\tilde x,i,i'))$. 


Assume firstly that $X_0$ and $\tilde X_0$ belong to the ball 
$\bar B(0,r)$ where $r$ is given by \eqref{eq:defr}. Let us define, 
for any $t\geq 0$,
\[
\Delta_t=\vert X_t-\tilde X_t\vert +\ind_\BRA{I_t\neq \tilde I_t}.
\]
The process ${(\Delta_t)}_{t\geq 0}$ is not 
Markovian. Nevertheless, as long as $I=\tilde I$, $\Delta$ decreases 
with an exponential rate which is greater than $\alpha$. If a single jump occurs, then 
$\Delta$ is increased by 1 and it can continuously increase (since the continuous parts are driven 
by two different vector fields). Nevertheless $\Delta$ is bounded above by $D+1$ with $D=2 r$. 
At the next coalescent time $T_c$ of two independent copies of $I$, $\Delta$ jumps to $\Delta-1$ and 
then decreases exponential fast once again (as long as the discrete components coincide). 
There exists $b>0$ such that $T_c$ is 
(stochastically) smaller than $\cE(b)$ (for example, if $E=\BRA{0,1}$, then $T_c$ is 
equal to the minimum of the jump times of the two independent processes which 
are both stochastically smaller than a random variable of law $\cE(\underline a)$ 
and $T_c$ is stochastically smaller than $\cE(2\underline a)$). Then 
$\dE(\Delta_t)\leq \dE(U_t)$ where the Markov process ${(U_t)}_{t\geq 0}$ 
on $[0,D]\cup\BRA{D+1}$ is driven by the infinitesimal generator  
\[
Gf(x)=
\begin{cases}
\displaystyle{ -\alpha xf'(x)+\kappa x(f(D+1)-f(x))}
&\text{if }x\in [0,D],\\ 
b (f(D)-f(D+1)) 
 &\text{if }x=D+1. 
\end{cases}
\]

\subsection{The companion process}

\begin{theorem}
For any $t\geq 0$,
\begin{equation}\label{eq:estimate}
\dE\PAR{U_t\vert U_0=D}
\leq  
\PAR{D+(D+1)
\PAR{\frac{p\alpha be}{\sqrt{(\alpha+b)^2-4p\alpha b}}}
\frac{\alpha t}{\alpha +\gamma}}
\exp\PAR{-\frac{1}{1+\alpha/\gamma}\alpha t}
 \end{equation}
where 
\[
p=e^{-D \kappa/\alpha }%
\quad \text{and}\quad%
\gamma=\frac{(\alpha +b)-\sqrt{(\alpha +b)^2-4p\alpha b}}{2}
=\frac{(\alpha +b)-\sqrt{(\alpha -b)^2+4(1-p)\alpha b}}{2}.
\]
\end{theorem}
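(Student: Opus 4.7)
\emph{Plan.} The idea is to combine the fact that the flow on $[0,D]$ contracts at rate $\alpha$ with the fact that the excursion dynamics on $\{D+1\}$ enforces a Malthusian-type decay at the rate $\gamma$ appearing in the statement, and then to interpolate the two exponents via a single cut-off.

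Starting from $D$, the process either remains in $[0,D]$ forever (probability $p = e^{-\kappa D/\alpha}$) with $U_t = De^{-\alpha t}$, or hits $D+1$ at some time $T$ with sub-density
\[
 f_T(s) = \kappa D e^{-\alpha s}\exp\!\PAR{-\tfrac{\kappa D}{\alpha}(1-e^{-\alpha s})},
\]
waits an independent $W\sim\mathrm{Exp}(b)$ at $D+1$, and then returns to $D$, where the strong Markov property regenerates the process. Writing $u(t):=\dE(U_t\mid U_0=D)$ and $\tau_1=T+W$ (with $\tau_1=+\infty$ on the no-jump event), this gives the defective renewal equation $u(t)=a(t)+(g*u)(t)$, where the subdensity $g$ of $\tau_1$ on $(0,\infty)$ has total mass $1-p<1$.

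The Malthusian parameter of this equation solves $\hat g(-\gamma)=\hat f_T(-\gamma)\cdot b/(b-\gamma)=1$, and I would extract an admissible rate from the key inequality
\[
 \hat f_T(-\mu)\leq \frac{(1-p)\alpha}{\alpha-\mu},\qquad 0\leq \mu<\alpha.
\]
After the substitution $v=1-e^{-\alpha s}$, this reads $yp\int_0^1 v^{-\mu/\alpha}e^{yv}\,dv\leq (1-p)/(1-\mu/\alpha)$ with $y=\kappa D/\alpha$, and one integration by parts reduces it to the trivial pointwise bound $v^{1-\mu/\alpha}\geq v$ on $[0,1]$ (equivalently, a stochastic-dominance statement between two exponential laws of rates $1$ and $1-\mu/\alpha$). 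Plugging this into $\hat g(-\gamma)\leq 1$ gives $(\alpha-\gamma)(b-\gamma)\geq (1-p)\alpha b$, and equality defines the smaller root of $X^2-(\alpha+b)X+p\alpha b=0$, exactly the $\gamma$ of the statement.

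From this Malthusian inequality, a Gronwall-type argument on the renewal equation yields exponential control at rate $\gamma$ of both $u(t)$ and $q(t):=\dP(U_t=D+1)$. To upgrade to the harmonic-mean rate $\mu=\alpha\gamma/(\alpha+\gamma)=\alpha/(1+\alpha/\gamma)$, for any cut-off $s\in[0,t]$ I split
\[
 \dE(U_t)\leq De^{-\alpha(t-s)}+(D+1)\dP(\text{the process visits }D+1\text{ on }(s,t]),
\]
using the flow estimate on $[0,D]$ on the event of no upward jump, and the trivial bound $U_t\leq D+1$ on the complement. The visit probability is in turn bounded by $q(s)+\int_s^t \kappa\dE(U_r\ind_{U_r\leq D})\,dr$, and the previous step yields an estimate of the form $C(1+c(t-s))e^{-\gamma s}$ via book-keeping of the accumulated visits to $D+1$ over $(s,t]$. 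The choice $s=\alpha t/(\alpha+\gamma)$ balances $\alpha(t-s)=\gamma s=\mu t$ and produces the announced form $(D+(D+1)ct)e^{-\mu t}$; the explicit constants match the statement after identifying $\sqrt{(\alpha+b)^2-4p\alpha b}$ as the difference $\gamma_+-\gamma$ of the two roots of the characteristic polynomial, and using the optimization $\sup_{r\geq 0}(\alpha-\mu)re^{-(\alpha-\mu)r}=1/e$ to produce the factor $e$.

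The main obstacle lies in the Laplace inequality above. A naive pointwise bound $f_T(s)\leq \kappa De^{-\alpha s}$ only delivers $\hat f_T(-\mu)\leq \kappa D/(\alpha-\mu)$ and the weaker rate $(\alpha-\gamma)(b-\gamma)=\kappa Db$; obtaining the sharp coefficient $(1-p)\alpha b$ requires the change of variable together with the elementary pointwise inequality $v^{1-\mu/\alpha}\geq v$, which fully exploits the shape of the exit density $f_T$ rather than just its tail.
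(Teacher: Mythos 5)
Your proposal reproduces the paper's architecture: the geometric loop structure of excursions from $D$ to $D{+}1$ and back, the Laplace bound $\hat f_T(-\mu)\le(1-p)\alpha/(\alpha-\mu)$ leading to $\gamma$ as the smaller root of $\xi^2-(\alpha+b)\xi+p\alpha b=0$, and the cut-off interpolation between the decay rate $\alpha$ on $[0,D]$ and the decay rate $\gamma$ governing the excursion times, balanced at $s=\alpha t/(\alpha+\gamma)$. Your proof of the Laplace bound is equivalent to the paper's but packaged differently: you change variables $v=1-e^{-\alpha s}$ and reduce to the pointwise inequality $v^{1-\mu/\alpha}\ge v$, whereas the paper proves the stronger statement that the normalized excursion time $S$ is stochastically dominated by an $\mathrm{Exp}(\alpha)$ variable (their Lemma \ref{lem:comp-expo}, via the tail bound $1-F(t)\le e^{-\alpha t}$); both exploit the same convexity and yield the same inequality.

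Where your write-up diverges from the paper and becomes vague is the middle step. The paper works directly with the last hitting time $H$ of $D$, viewed as a compound geometric sum $\sum_{i=1}^N(S_i+E_i)$, bounds its Laplace transform by $L(s)\le p\alpha b/\bigl((\gamma-s)(\tilde\gamma-s)\bigr)$, applies Chernoff, $\dP(H>\beta t)\le L(s)e^{-s\beta t}$, and optimizes in $s$ (taking $s=\gamma-1/(\beta t)$, exploiting the simple pole at $s=\gamma$). This produces in one stroke the linear-in-$t$ prefactor and the factor $e$, with the exact constant $p\alpha b/(\tilde\gamma-\gamma)$. Your route instead sets up a defective renewal equation, invokes an unspecified ``Gronwall-type argument'' for rate-$\gamma$ decay of $u$ and $q$, and then counts expected visits to $D{+}1$ on $(s,t]$. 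The event you control (``a visit to $D{+}1$ after time $s$'') is essentially the paper's event $\{H>s\}$, and your approach can likely be made to work since $\hat g(-\gamma)<1$ strictly; but the Gronwall constant is not made explicit, and your linear-in-$t$ factor arises from the lossy bound $\int_s^t e^{-\gamma r}\,dr\le(t-s)e^{-\gamma s}$ rather than from the genuine logarithmic correction at the pole of $L$. As written you would not recover the sharp prefactor $p\alpha b e/\sqrt{(\alpha+b)^2-4p\alpha b}$ of the statement without redoing this step along the paper's Chernoff lines.
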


\begin{remark}
 If $\alpha$ goes to $\infty$, then $\gamma$ goes to d whereas 
 $\gamma\sim p\alpha /b$ if $b$ goes to $\infty$. 
\end{remark}

\begin{proof}
Starting from $D+1$, the process $U$ jumps after 
a random time with law $\cE(b)$ to $d$ and then goes to zero 
exponentially fast until it (possibly) goes back to $D+1$. 
The first jump time $T$ starting from $D$ can be constructed 
as follows: let $E$ be a random variable with exponential law $\cE(1)$. Then 
\[
T\overset{\cL}{=}
\begin{cases}
\displaystyle{-\frac{1}{\alpha}\log\PAR{1-\frac{\alpha E}{D \kappa}}}& \text{if }
\displaystyle{E<\frac{D \kappa}{\alpha}}, \\
 +\infty &\text{otherwise.} 
\end{cases}
\]
Indeed, conditionally on $\BRA{U_0=D}$, 
\[
\int_0^t\!\lambda (V_s)ds=\int_0^t\!d \kappa e^{-\alpha s}\,ds%
=\frac{D \kappa}{\alpha}(1-e^{-\alpha t}). 
\]
In other words, the cumulative distribution function $F_T$ of $T$ 
is such that, for any $t\geq 0$, 
\[
1-F_T(t)=\dP(T>t)=\exp\PAR{-\frac{D \kappa}{\alpha}(1-e^{-\alpha t})}.
\]
Let us define $p=e^{-D \kappa/\alpha}$. The law of $T$ is 
the mixture with respective weights $p$ and $1-p$ of a Dirac mass 
at $+\infty$ and a probability measure on $\dR$ with density 
\begin{equation}\label{eq:densite}
f\,:\,t\mapsto f(t)=\frac{D \kappa}{1-p}%
 e^{-\alpha t}e^{-\frac{D \kappa}{\alpha }(1-e^{-\alpha t})}\ind_{(0,+\infty)}(t) 
\end{equation}
and cumulative distribution function 
\[
F\,:\,t\mapsto F(t)=%
 \PAR{\frac{1-\exp\PAR{-\frac{D \kappa}{\alpha}(1-e^{-\alpha t})}}
 {1-\exp\PAR{-\frac{D \kappa}{\alpha}}}}\ind_{(0,+\infty)}(t).
\]

Starting at $D$, $U$ will return to $D$ with probability $1-p$. The Markov 
property ensures that the number $N$ of returns of $U$ to $D$ is a 
random variable with geometric law with parameter $p$. The length 
of a finite loop from $D$ to $D$ can be written as the sum $S+E$ 
where the law of $S$ has the density function $f$ given in \eqref{eq:densite}, 
the law of $E$ is the exponential measure with parameter $b$ 
and $S$ and $E$ are independent. 

\begin{lemma}\label{lem:comp-expo}
The variable $S$ is stochastically smaller than an exponential random 
variable with parameter $\alpha$ \emph{i.e.} for any $t\geq 0$, 
$F(t)\geq F_\alpha(t)$ where $F_\alpha(t)=(1-e^{-\alpha t})\ind_\BRA{t>0}$.
\end{lemma}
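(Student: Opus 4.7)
The plan is to compare the two cumulative distribution functions directly after a convenient change of variables that turns the inequality into a one-parameter statement about the exponential on $[0,1]$. Setting $a=D\kappa/\alpha>0$ and $u=1-e^{-\alpha t}\in[0,1)$, the inequality $F(t)\geq F_\alpha(t)$ to be proved is exactly
\[
\frac{1-e^{-au}}{1-e^{-a}}\geq u, \qquad u\in[0,1],
\]
or equivalently $g(u):=1-e^{-au}-u(1-e^{-a})\geq 0$ on $[0,1]$.

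The second step is to check this reformulated inequality via elementary analysis. One has $g(0)=0$ and $g(1)=0$, and the second derivative $g''(u)=-a^{2}e^{-au}$ is strictly negative, so $g$ is strictly concave on $[0,1]$. A strictly concave function that vanishes at both endpoints of an interval is nonnegative on the whole interval (its graph lies above the chord joining the endpoints, which is the zero function here). This gives $g\geq 0$, hence $F(t)\geq F_\alpha(t)$ for every $t\geq 0$, which is exactly the stochastic domination $S\preceq \mathcal{E}(\alpha)$.

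I do not anticipate a real obstacle: once the cosmetic substitution $u=1-e^{-\alpha t}$ is made, the statement reduces to the standard fact that the chord of a concave function lies below the function. The only point to be a bit careful about is the boundary behaviour at $t=0$ and $t\to\infty$ (so that the substitution indeed sweeps out all of $[0,1)$ and the inequality at $t=0$ holds trivially, since both $F(0)$ and $F_\alpha(0)$ equal $0$). Alternatively, one can verify the inequality directly without the substitution by differentiating $t\mapsto F(t)-F_\alpha(t)$, but the concavity argument above is the shortest route.
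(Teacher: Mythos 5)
Your proof is correct and is essentially the same as the paper's: after the change of variables $v=e^{-\alpha t}$ the paper's asserted inequality $\frac{e^{av}-1}{e^a-1}\leq v$ is exactly your $\frac{1-e^{-au}}{1-e^{-a}}\geq u$ with $u=1-v$, both being the statement that the exponential's graph lies on one side of its chord over $[0,1]$. The only difference is that the paper states the key inequality without justification, whereas you supply the (routine) concavity argument explicitly.
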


\begin{proof}
For any $t\geq 0$, 
\[
1-F(t)=%
\frac{\exp\PAR{\frac{D \kappa}{\alpha}e^{-\alpha t}}-1}{\exp\PAR{\frac{D \kappa}{\alpha}}-1} %
\leq e^{-\alpha t}=1-F_\alpha(t).
\]
This ensures the stochastic bound. 
\end{proof}
As a consequence, the Laplace transform $L_S$ of $S$ with density $f$ 
is smaller than the one of an exponential variable with parameter $\alpha$: 
for any $s<\alpha$, 
\[
L_S(s) \leq \frac{\alpha}{\alpha-s}.
\]
If $L_e$ is the Laplace transform of $S+E$, then, for any 
$s<\alpha \wedge b$, we have
\[
L_e(s)\leq \frac{\alpha}{\alpha-s}\frac{b}{b-s}.
\]
Let us denote by $H$ the last hitting time of $d$ \emph{i.e.} the last jump 
time of $U$ and by $L$ its Laplace transform. Let us introduce 
$N\sim\cG(p)$, ${(S_i)}_{i\geq 1}$ with density $f$ and 
${(E_i)}_{i\geq 1}$ with law $\cE(b)$. All the random variables are 
assumed to be independent. Then  
\[
H\overset{\cL}{=}\sum_{i=1}^N (S_i+E_i). 
\]
Classically, for any $s\in\dR$ such that 
$(1-p)L_e(s)<1$, one has 
\[
 L(s)=%
\dE\PAR{e^{s H}}=\frac{pL_e(s)}{1-(1-p)L_e(s)}%
=\frac{p}{1-p}\PAR{\frac{1}{1-(1-p)L_e(s)}-1}.
\]
Let us denote by 
\[
\gamma=\frac{(\alpha+b)-\sqrt{(\alpha+b)^2-4p\alpha b}}{2}
\quad\text{and}\quad
\tilde \gamma=\frac{(\alpha+b)+\sqrt{(\alpha+b)^2-4p\alpha b}}{2}
\]
the two roots of $\xi^2-(\alpha+b)\xi+p\alpha b=0$. Notice that 
$\gamma<\alpha\wedge b<\tilde \gamma$. For any 
$s<\gamma$, one has $(1-p)L_e(s)<1$ and 
\begin{equation}\label{eq:maj-laplace}
L(s)\leq \frac{p \alpha b}{(\gamma-s)(\tilde\gamma-s)}
\leq \frac{p \alpha b}{\tilde\gamma-s}\frac{1}{\gamma-s}.
\end{equation}

Let us now turn to the control of $\dE\PAR{U_t\vert U_0=D}$. The idea is 
to discuss whether $H>\beta t$ or not for some $\beta\in(0,1)$ (and then to 
choose $\beta$ as good as possible):
\begin{itemize}
\item if $H<\beta t$, then $U_t\leq e^{-(1-\beta)\alpha t}$,
\item the event $\BRA{H\geq \beta t}$ has a small probability 
for large $t$ since $H$ has a finite Laplace transform on a 
neighbourhood of the origin. 
\end{itemize}
For any $\beta\in(0,1)$ and 
$s>0$, 
\begin{align}
\dE\PAR{U_t\vert U_0=D}&=\dE\PAR{U_t\ind_\BRA{T\leq \beta t}}+%
\dE\PAR{U_t\ind_\BRA{T> \beta t}}\nonumber \\
&\leq De^{-(1-\beta)\alpha t}+(D+1)L(s) e^{-s \beta t}. 
 \label{eq:majo}
 \end{align}
 From Equation \eqref{eq:maj-laplace}, we get that, for any 
 $s<\gamma$, $\log L(s)-\beta t s \leq h(s)$
where
\[
h(s)=\log \PAR{\frac{p\alpha b}{\tilde \gamma-\gamma}}
-\log(\gamma-s)-\beta t s. 
\]
The function $h$ reaches its minimum at $s(t)=\gamma-(\beta t)^{-1}$
and 
\[
h(s(t))=%
\log \PAR{\frac{p\alpha b}{\tilde \gamma-\gamma}}+%
\log(\beta t)+1-\gamma\beta t.
\]
For $t>0$ and $\beta\in (0,1)$, choose $s(t)=\gamma-(\beta t)^{-1}$ 
in \eqref{eq:majo} to get 
\begin{align*}
\dE\PAR{U_t\vert U_0=D}&\leq 
De^{-(1-\beta)\alpha t}+(D+1) e^{h(\gamma(t))}\\
&\leq De^{-(1-\beta)\alpha t}+(D+1) 
\PAR{\frac{p\alpha be}{\tilde \gamma-\gamma}}%
\beta t e^{-\gamma \beta t}.
\end{align*}
At last, one can choose $\beta=\alpha (\alpha+\gamma)^{-1}$ in order to 
have $(1-\beta)\alpha=\gamma \beta$. This ensures that 
\[
\dE\PAR{U_t\vert U_0=D}\leq  
\PAR{D+(D+1) \PAR{\frac{p\alpha be}{\tilde \gamma-\gamma}}
\frac{\alpha t}{\alpha+\gamma}}
\exp\PAR{-\frac{\alpha\gamma}{\alpha+\gamma}t}. 
\]
Replacing $\tilde\gamma-\gamma$ by its expression as a 
function of $\alpha$, $b$ and $p$ provides \eqref{eq:estimate}.
\end{proof}

\section{Example} 
\label{sec:Example}
The Morris--Lecar model introduced in \cite{ML81} describes the evolution in time of the electric 
potential $V(t)$ in a neuron. The neuron exchanges different ions with its environment via ion 
channels which may be open or closed. In the original ---~deterministic~--- model, the proportion 
of open channels of different types are coded by two functions $m(t)$ and $n(t)$, and the 
three quantities $m$, $n$ and $V$ evolve through the flow of an ordinary differential equation. 

Various stochastic versions of this model exist. Here we focus on a model described in~\cite{WTP10}, 
to which we refer for additional information. This model is motivated by the fact that
 $m$ and $n$, being proportions of open channels, are better 
coded as discrete variables. More precisely, we fix a large integer $K$ (the total 
number of channels) and define a PDMP $(V,u_1,u_2)$ with values in 
$\dR \times \{0,1/K,2/K \dots, 1\}^2$ as follows. 

Firstly, the potential $V$ evolves according to
\begin{equation}
  \frac{dV(t)}{dt} = \frac{1}{C} \left(I - \sum_{i=1}^3 g_iu_i(t) (V - V_i) \right)
  \label{eq=evolV}
\end{equation}
where $C$ and $I$ are positive constants (the capacitance and input current), the $g_i$ and $V_i$ 
are positive constants (representing conductances and equilibrium potentials for different types 
of ions), $u_3(t)$ is equal to $1$ and $u_1(t)$, $u_2(t)$ are the (discrete) proportions of open 
channels for two types of ions. 

These two discrete variables follow birth-death processes on $\{0, 1/K, \ldots,1\}$ with 
birth rates $\alpha_1$, $\alpha_2$ and death rates $\beta_1$, $\beta_2$  that depend on the 
potential $V$: 
\begin{equation}
  \begin{aligned}
  \alpha_i(V) &= c_i \cosh\left( \frac{V - V'_i}{2V''_i}\right) \left(1 + \tanh\left(\frac{V - V'_i}{V''_i}\right)
  \right) \\
  \beta_i(V) &= c_i \cosh\left( \frac{V - V'_i}{2V''_i}\right) \left(1 - \tanh\left(\frac{V - V'_i}{V''_i}\right)
  \right)
  \end{aligned}
  \label{eq=rates}
\end{equation}
where the $c_i$ and $V'_i$, $V''_i$ are constants. 

Let us check  that our main result can be applied in this example. 
Formally the process is a PDMP with $d=1$ and the finite set $E = \{0, 1/K,\ldots,1\}^2$. 
The discrete process $(u_1,u_2)$ plays the role of the index $i\in E$, and the fields
$F^{(u_1,u_2)}$ are defined (on $\dR$) by \eqref{eq=evolV} by setting $u_1(t) = u_1$, 
$u_2(t) = u_2$. 

The constant term $u_3g_3$ in \eqref{eq=evolV} ensures that the uniform dissipation property
\eqref{eq:dissip} is satisfied: for all $(u_1,u_2)$, 
\begin{align*}
  \DP{ V - \tilde{V}, F^{(u_1,u_2)}(V) - F^{(u_1,u_2)}(\tilde{V})}
  &= - \frac{1}{C}\sum_{i=1}^3 u_ig_i(V-\tilde{V})^2 \\
  &\leq - \frac{1}{C}u_3g_3 (V-\tilde{V})^2.
\end{align*}

The Lipschitz character and the bound from below on the rates are not immediate. 
Indeed the jump rates \eqref{eq=rates} are not bounded from below if $V$ is allowed to 
take values in $\dR$. 

However, a direct analysis of \eqref{eq=evolV} shows that $V$ is essentially bounded~: 
all the fields $F^{(u_1,u_2)}$ point inward at the boundary of the (fixed) line segment
$\mathcal{S} = [0,\max(V_1,V_2, V_3 + (I+1)/g_3u_3)]$, so if $V(t)$ starts
in this region it cannot get out. The necessary bounds all follow by compactness, since
$\alpha_i(V)$ and $\beta_i(V)$ are $\mathcal{C}^1$ in $\mathcal{S}$ and  strictly positive. 


\bibliographystyle{amsplain}
\bibliography{ecp-quantitatif}

\providecommand{\bysame}{\leavevmode\hbox to3em{\hrulefill}\thinspace}
\providecommand{\MR}{\relax\ifhmode\unskip\space\fi MR }
\providecommand{\MRhref}[2]{%
  \href{http://www.ams.org/mathscinet-getitem?mr=#1}{#2}
}
\providecommand{\href}[2]{#2}
\begin{thebibliography}{10}

\bibitem{iksanov}
G.~Alsmeyer, A.~Iksanov, and U.~R{\"o}sler, \emph{On distributional properties
  of perpetuities}, J. Theoret. Probab. \textbf{22} (2009), no.~3, 666--682.
  \MR{2530108 (2011b:60068)}

\bibitem{BH}
Y.~Bakhtin and T.~Hurth, \emph{Invariant densities for dynamical systems with
  random switching}, Nonlinearity \textbf{25} (2012), no.~10, 2937--2952.

\bibitem{BCGMZ}
J.-B. Bardet, A.~Christen, A.~Guillin, A.~Malrieu, and P.-A. Zitt, \emph{Total
  variation estimates for the {TCP} process}, Preprint
  \href{http://arxiv.org/abs/1112.6298}{arXiv:1112.6298} available on ar{X}iv,
  2012.

\bibitem{bgm}
J.-B. Bardet, H.~Gu{\'e}rin, and F.~Malrieu, \emph{Long time behavior of
  diffusions with {M}arkov switching}, ALEA Lat. Am. J. Probab. Math. Stat.
  \textbf{7} (2010), 151--170. \MR{2653702 (2011k:60263)}

\bibitem{BLMZ1}
M.~Benaïm, S.~Le~Borgne, F.~Malrieu, and P.-A. Zitt, \emph{On the stability of
  planar randomly switched systems}, Preprint
  \href{http://arxiv.org/abs/1204.1921}{arXiv:1204.1921} available on ar{X}iv,
  2012.

\bibitem{BLMZ2}
\bysame, \emph{Qualitative properties of certain piecewise deterministic markov
  processes}, Preprint \href{http://arxiv.org/abs/1204.4143}{arXiv:1204.4143}
  available on ar{X}iv, 2012.

\bibitem{boxma}
O.~Boxma, H.~Kaspi, O.~Kella, and D.~Perry, \emph{On/{O}ff {S}torage {S}ystems
  with {S}tate-{D}ependent {I}npout, {O}utpout and {S}witching {R}ates},
  Probab. Engrg. Inform. Sci. \textbf{19} (2005), no.~1, 1--14. \MR{2104547
  (2005i:60189)}

\bibitem{riedler}
E.~Buckwar and M.~G. Riedler, \emph{An exact stochastic hybrid model of
  excitable membranes including spatio-temporal evolution}, J. Math. Biol.
  \textbf{63} (2011), no.~6, 1051--1093. \MR{2855804 (2012h:92075)}

\bibitem{MR2548501}
P.~Caputo, P.~Dai~Pra, and G.~Posta, \emph{Convex entropy decay via the
  {B}ochner-{B}akry-\'{E}mery approach}, Ann. Inst. Henri Poincar\'e Probab.
  Stat. \textbf{45} (2009), no.~3, 734--753. \MR{2548501 (2010m:60290)}

\bibitem{CMP}
D.~Chafa\"i, F.~Malrieu, and K.~Paroux, \emph{On the long time behavior of the
  {TCP} window size process}, Stochastic Process. Appl. \textbf{120} (2010),
  no.~8, 1518--1534. \MR{2653264 (2011m:60230)}

\bibitem{MR1039184}
O.~L.~V. Costa, \emph{Stationary distributions for piecewise-deterministic
  {M}arkov processes}, J. Appl. Probab. \textbf{27} (1990), no.~1, 60--73.
  \MR{1039184 (91d:60169)}

\bibitem{MR2274799}
O.~L.~V. Costa and F.~Dufour, \emph{Ergodic properties and ergodic
  decompositions of continuous-time {M}arkov processes}, J. Appl. Probab.
  \textbf{43} (2006), no.~3, 767--781. \MR{2274799 (2007j:60118)}

\bibitem{MR2385873}
\bysame, \emph{Stability and ergodicity of piecewise deterministic {M}arkov
  processes}, SIAM J. Control Optim. \textbf{47} (2008), no.~2, 1053--1077.
  \MR{2385873 (2009b:93163)}

\bibitem{davis}
M.~H.~A. Davis, \emph{Piecewise-deterministic {M}arkov processes: a general
  class of nondiffusion stochastic models}, J. Roy. Statist. Soc. Ser. B
  \textbf{46} (1984), no.~3, 353--388, With discussion. \MR{790622 (87g:60062)}

\bibitem{MR1283589}
\bysame, \emph{Markov models and optimization}, Monographs on Statistics and
  Applied Probability, vol.~49, Chapman \& Hall, London, 1993. \MR{1283589
  (96b:90002)}

\bibitem{saporta-yao}
B.~de~Saporta and J.-F. Yao, \emph{Tail of a linear diffusion with {M}arkov
  switching}, Ann. Appl. Probab. \textbf{15} (2005), no.~1B, 992--1018.
  \MR{2114998 (2005k:60257)}

\bibitem{diaconis}
P.~Diaconis and D.~Freedman, \emph{Iterated random functions}, SIAM Rev.
  \textbf{41} (1999), no.~1, 45--76. \MR{1669737 (2000c:60102)}

\bibitem{MR1710229}
F.~Dufour and O.~L.~V. Costa, \emph{Stability of piecewise-deterministic
  {M}arkov processes}, SIAM J. Control Optim. \textbf{37} (1999), no.~5,
  1483--1502 (electronic). \MR{1710229 (2000g:60125)}

\bibitem{MR1895332}
V.~Dumas, F.~Guillemin, and Ph. Robert, \emph{A {M}arkovian analysis of
  additive-increase multiplicative-decrease algorithms}, Adv. in Appl. Probab.
  \textbf{34} (2002), no.~1, 85--111. \MR{1895332 (2003f:60168)}

\bibitem{FGM}
J.~Fontbona, H.~Gu\'erin, and F.~Malrieu, \emph{Quantitative estimates for the
  long time behavior of an ergodic variant of the telegraph process}, Adv. in
  Appl. Probab. \textbf{44} (2012), no.~4, 977--994.

\bibitem{goldie-grubel}
C.~M. Goldie and R.~Gr{\"u}bel, \emph{Perpetuities with thin tails}, Adv. in
  Appl. Probab. \textbf{28} (1996), no.~2, 463--480. \MR{1387886 (97f:60124)}

\bibitem{MR2588247}
C.~Graham and Ph. Robert, \emph{Interacting multi-class transmissions in large
  stochastic networks}, Ann. Appl. Probab. \textbf{19} (2009), no.~6,
  2334--2361. \MR{2588247 (2011b:60383)}

\bibitem{GR2}
\bysame, \emph{Self-adaptive congestion control for multiclass intermittent
  connections in a communication network}, Queueing Syst. \textbf{69} (2011),
  no.~3-4, 237--257. \MR{2886470}

\bibitem{guyon}
X.~Guyon, S.~Iovleff, and J.-F. Yao, \emph{Linear diffusion with stationary
  switching regime}, ESAIM Probab. Stat. \textbf{8} (2004), 25--35
  (electronic). \MR{MR2085603 (2005h:60244)}

\bibitem{hitwes}
P.~Hitczenko and J.~Weso{\l}owski, \emph{Perpetuities with thin tails
  revisited}, Ann. Appl. Probab. \textbf{19} (2009), no.~6, 2080--2101.
  \MR{2588240 (2010m:60220)}

\bibitem{kesten}
H.~Kesten, \emph{Random difference equations and renewal theory for products of
  random matrices}, Acta Math. \textbf{131} (1973), 207--248. \MR{0440724 (55
  \#13595)}

\bibitem{Kussel}
E.~Kussell and S.~Leibler, \emph{Fluctuating environments phenotypic diversity,
  population growth, and information in fluctuating environments}, Science
  \textbf{309} (2005), 2075--2078.

\bibitem{ML81}
C.~Morris and H.~Lecar, \emph{{V}oltage oscillations in the barnacle giant
  muscle fiber}, Biophys. J. \textbf{35} (1981), no.~1, 193--213.

\bibitem{PTW}
K.~Pakdaman, M.~Thieullen, and G.~Wainrib, \emph{Fluid limit theorems for
  stochastic hybrid systems with application to neuron models}, Adv. in Appl.
  Probab. \textbf{42} (2010), no.~3, 761--794. \MR{2779558 (2011m:60070)}

\bibitem{MR1105086}
S.~T. Rachev, \emph{Probability metrics and the stability of stochastic
  models}, Wiley Series in Probability and Mathematical Statistics: Applied
  Probability and Statistics, John Wiley \& Sons Ltd., Chichester, 1991.
  \MR{MR1105086 (93b:60012)}

\bibitem{RMC}
O.~Radulescu, A.~Muller, and A.~Crudu, \emph{Th\'eor\`emes limites pour des
  processus de {M}arkov \`a sauts. {S}ynth\`ese des r\'esultats et applications
  en biologie mol\'eculaire}, Technique et Science Informatiques \textbf{26}
  (2007), no.~3-4, 443--469.

\bibitem{MR1490046}
L.~Saloff-Coste, \emph{Lectures on finite {M}arkov chains}, Lectures on
  probability theory and statistics ({S}aint-{F}lour, 1996), Lecture Notes in
  Math., vol. 1665, Springer, Berlin, 1997, pp.~301--413. \MR{1490046
  (99b:60119)}

\bibitem{vervaat}
W.~Vervaat, \emph{On a stochastic difference equation and a representation of
  nonnegative infinitely divisible random variables}, Adv. in Appl. Probab.
  \textbf{11} (1979), no.~4, 750--783. \MR{MR544194 (81b:60064)}

\bibitem{MR1964483}
C.~Villani, \emph{Topics in optimal transportation}, Graduate Studies in
  Mathematics, vol.~58, American Mathematical Society, Providence, RI, 2003.
  \MR{MR1964483 (2004e:90003)}

\bibitem{WTP10}
G.~Wainrib, M.~Thieullen, and K.~Pakdaman, \emph{Intrinsic variability of
  latency to first-spike}, Biol. Cybernet. \textbf{103} (2010), no.~1, 43--56.
  \MR{2658681}

\end{thebibliography}

\ACKNO{We deeply thank the referee for his/her quick and constructive report. 
FM and PAZ thank MB for his kind hospitality 
and his coffee breaks. We acknowledge financial support from the Swiss National Foundation 
Grant  FN 200021-138242/1 and the French ANR projects EVOL and ProbaGeo.}

\end{document}